\documentclass[11pt]{amsart}
\usepackage[margin=.9in]{geometry}
\usepackage{latexsym}
\usepackage{amsfonts}
\usepackage{amsmath}
\usepackage{amssymb}
\usepackage{amsthm}
\usepackage{enumerate}
\setlength{\parskip}{1em}
\usepackage[hang,flushmargin]{footmisc}
\usepackage{caption}
\usepackage{tabu}
\usepackage{mathrsfs}
\usepackage{amsaddr}

\usepackage{graphicx}
\usepackage{epstopdf}
\usepackage{epsfig}
 
\usepackage{bm} 

\newtheorem{theorem}{Theorem}[section]
\newtheorem{lemma}{Lemma}[section]

\newtheorem{corollary}{Corollary}[section]
\newtheorem{fact}{Fact}[section]

\theoremstyle{definition}
\newtheorem{definition}{Definition}[section]
\newtheorem{remark}{Remark}[section]
\newtheorem{example}{Example}[section]
\newtheorem{question*}{Question}

\begin{document}
\title{Preimages under the Stack-Sorting Algorithm}
\author{Colin Defant}
\address{University of Florida \\ 1400 Stadium Rd. \\ Gainesville, FL 32611 United States}
\email{cdefant@ufl.edu}

\maketitle

\begin{abstract}
We use a method for determining the number of preimages of any permutation under the stack-sorting map in order to obtain recursive upper bounds for the numbers $W_t(n)$ and $W_t(n,k)$ of $t$-stack sortable permutations of length $n$ and $t$-stack sortable permutations of length $n$ with exactly $k$ descents. From these bounds, we are able to significantly improve the best known upper bounds for $\displaystyle{\lim_{n\to\infty}\sqrt[n]{W_t(n)}}$ when $t=3$ and $t=4$.      
\end{abstract}
\vspace{.3cm}
\hrule

\noindent 2010 {\it Mathematics Subject Classification}: 05A05; 05A15; 05A16  

\noindent \emph{Keywords: Stack-sorting; preimage; permutation; descent} 

\section{Introduction} 

Throughout this paper, we let $S_n$ denote the permutations of the set $[n]=\{1,2,\ldots,n\}$. Recall that a \emph{descent} of a permutation $\sigma=\sigma_1\sigma_2\cdots\sigma_n$ is an index $i$ such that $\sigma_i>\sigma_{i+1}$ (we do not include $n$ as a descent). If $i$ is a descent of $\sigma$, then the entry $\sigma_i$ is called a \emph{decent top} of $\sigma$. If $\sigma$ has $k$ descents, then we may write $\sigma$ as the concatenation of $k+1$ increasing subsequences, which are called the \emph{ascending runs} of $\sigma$. 

In his 1990 Ph.D. thesis, Julian West \cite{West90} studied a function $s$ that transforms permutations into permutations through the use of a vertical stack. We call the function $s$ the \emph{stack-sorting map}. Given an input permutation $\pi=\pi_1\pi_2\cdots\pi_n$, the permutation $s(\pi)$ is computed by the following algorithm. At any point in time during the algorithm, if the next entry in the input permutation is larger than the entry at the top of the stack or if the stack is empty, the next entry in the input permutation is placed at the top of the stack. Otherwise, the entry at the top of the stack is annexed to the end of the growing output permutation. For example, $s(35214)=31245$. 

The following observation due to West provides an alternative recursive means of defining the stack-sorting map. 
\begin{fact} \label{Fact1} 
Let $\pi$ be a permutation of positive integers with largest entry $n$, and write $\pi=LnR$, where $L$ (respectively, $R$) is the (possibly empty) substring of $\pi$ to the left (respectively, right) of the entry $n$. Then $s(\pi)=s(L)s(R)n$.
\end{fact}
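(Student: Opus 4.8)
The plan is to run the stack-sorting algorithm on the input $\pi = LnR$ and to follow it through three phases, governed in turn by $L$, by the single entry $n$, and by $R$. For the first phase I would observe that while the algorithm reads the entries of $L$, every comparison it makes is between two entries of $L$ and the entry $n$ has not yet been seen; consequently the sequence of pushes and pops it performs is identical to the one performed when the algorithm is run on the input $L$ by itself. By induction on the number of entries read, the stack contents and the partial output agree at every step with those of the $L$-computation. In particular, at the instant $n$ is about to be read, the output produced so far equals the output that the $L$-computation produces just before it finally empties its stack, and the stack holds exactly the entries that the $L$-computation would then release, in the same bottom-to-top order.

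Next I would analyze the entry $n$ itself: since $n$ is larger than every other entry, it is larger than every entry currently in the stack, so the algorithm cannot place $n$ on top of the stack and must first send all of those entries to the output before placing $n$ in the stack. They leave the stack in precisely the order in which the $L$-computation releases them during its final step, so immediately after $n$ is stacked the total output equals $s(L)$ and the stack contains only $n$. For the third phase, the algorithm now reads the entries of $R$, each smaller than $n$, on top of a stack whose bottom entry is $n$; and this entry $n$ is never removed while $R$ is being read, because the moment every entry above it is gone the algorithm compares the next entry of $R$ (which is smaller than $n$) with $n$ and places it in the stack, exactly as it would from an empty stack. Hence $n$ is inert for all comparisons made in this phase, and by another induction the part of the stack above $n$, together with the output accumulated after $s(L)$, evolves exactly as in the run of the algorithm on $R$ by itself. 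When the input is exhausted, the final emptying of the stack first releases the entries lying above $n$, completing $s(R)$, and then releases $n$, so the output is $s(L)\,s(R)\,n$.

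The main obstacle is making the two ``the computation restricted to these entries behaves as on the shorter word'' assertions precise, rather than merely plausible. The cleanest route is to write down the obvious invariant relating the state of the $\pi$-computation --- its stack contents together with its output-so-far --- to the state of the $L$-computation (and later the $R$-computation), and to check directly that this invariant is preserved by each elementary step of the algorithm, treating the ``push'' step and the ``pop'' step separately. Everything else is bookkeeping, and the degenerate cases in which $L$ or $R$ is empty are covered by the same argument with the corresponding phase vacuous.
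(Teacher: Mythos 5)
Your argument is correct and complete: the paper states Fact~\ref{Fact1} without proof (it is West's observation, taken as known), so there is no internal proof to compare against, and your direct phase-by-phase verification is exactly the standard way to establish it. The three-phase analysis is sound: while $L$ is read no comparison involves $n$, so the computation coincides with the run on $L$ alone; when $n$ arrives it exceeds every stack entry, forcing the stack to empty (which completes $s(L)$, since those pops occur in the same top-to-bottom order as the final emptying in the $L$-run) before $n$ is pushed; and thereafter $n$ sits inertly at the bottom, behaving like an empty stack for every entry of $R$, so the remainder of the run reproduces $s(R)$ and ends by emitting $n$. Your closing remark about formalizing the two ``simulation'' claims via a step-preserved invariant is the right way to make this rigorous, and the empty-$L$ and empty-$R$ cases are indeed vacuous instances of the same argument. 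One small point worth noting: your proof uses the standard convention that the next entry is pushed only when it is \emph{smaller} than the top of the stack (so a larger next entry forces pops); this is the convention consistent with the worked example $s(35214)=31245$ and with the statement of Fact~\ref{Fact1} itself, even though the verbal description of the algorithm in the introduction has the inequality stated the other way around, so your reading is the intended one. An equivalent alternative route, which some authors prefer, is to take Fact~\ref{Fact1} as the recursive definition of $s$ and verify that it agrees with the stack description; your proof is precisely the content of that verification.
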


As the name suggests, the purpose of the stack-sorting map is to sort a permutation $\pi$ into the identity permutation $123\cdots n$. We have seen that $s(35214)=31245$, so the image $s(\pi)$ of a permutation $\pi$ is not always the identity permutation. Nevertheless, it follows easily from Fact \ref{Fact1} that $s^{n-1}(\pi)=123\cdots n$ for any $\pi\in S_n$. Therefore, we can sort any permutation if we are allowed to use the stack iteratively. This leads to the following definition. 
\begin{definition} 
Let $t$ be a positive integer. A permutation $\pi\in S_n$ is called \emph{$t$-stack sortable} if $s^t(\pi)=123\cdots n$. A permutation that  is $1$-stack sortable is simply called \emph{sortable}. Let $W_t(n)$ denote the number of $t$-stack sortable permutations of length $n$. Let $W_t(n,k)$ denote the number of $t$-stack sortable permutations of length $n$ with exactly $k$ descents. 
\end{definition}

It is well-known that $W_1(n)$ is equal to  $C_n$, the $n^\text{th}$ Catalan number. In his dissertation, West conjectured that 
\begin{equation}\label{Eq24}
W_2(n)=\frac{2}{(n+1)(2n+1)}{3n\choose n}.
\end{equation} Doron Zeilberger 
provided the first proof of this result two years later \cite{Zeilberger92}. Subsequent combinatorial proofs \cite{Dulucq2,Goulden} of \eqref{Eq24} provided bijections between $2$-stack sortable permutations of length $n$ and rooted nonseparable planar maps with $n+1$ edges, the latter of which Tutte had already enumerated \cite{Tutte}. In 1997, Cori, Jacquard, and Schaeffer \cite{Cori} found a bijection between $2$-stack sortable permutations of length $n$ and the class of $\beta(1,0)$-trees. In \cite{Bona}, B\'ona briefly calls the reader's attention to lattice paths that use the steps $(1,1)$, $(0,-1)$, and $(-1,0)$ and never leave the first quadrant. He notes that the number of such paths using $3n$ steps that start and end at $(0,0)$ is equal to $2^{2n-1}W_2(n)$ (see \cite{Bousquet02}), so he naturally inquires about the possibility of using this result to obtain a simple combinatorial proof of \eqref{Eq24}.

For any given $t\geq 3$, obtaining an explicit formula for $W_t(n)$ seems to be a highly 
formidable task. Indeed, the best known upper bound \cite[Theorem 3.4]{Bona03} is given by $W_t(n)\leq (t+1)^{2n}$. B\'ona \cite{Bona} has conjectured the much stronger upper bound $W_t(n)\leq{(t+1)n\choose n}$. Recently, \'Ulfarsson has given a description of $3$-stack sortable permutations in terms of the avoidance of mesh patterns and new types of patterns that he calls ``decorated patterns" \cite{Ulfarsson}. However, his description does not immediately lend itself to any means of enumerating $3$-stack sortable permutations. We refer the reader to \cite{Bona} and \cite{Bona03} for more thorough treatments of the history of stack-sorting algorithms. 

West defined the \emph{fertility} $F(\pi)$ of a permutation $\pi$ to be the number of preimages of $\pi$ under the stack-sorting map. Bousquet-M\'elou \cite{Bousquet00} then defined a \emph{sorted} permutation to be a permutation whose fertility is positive. Observe that the sortable permutations of length $n$ are precisely the preimages of the identity permutation $123\cdots n$, so the fertitily of $123\cdots n$ is $C_n$. The $(t+1)$-stack sortable permutations are precisely the preimages of $t$-stack sortable permutations. Consequently, $W_{t+1}(n)$ is the sum of the fertilities of the $t$-stack sortable permutations of length $n$. These observations form the central motivation for this paper. 

In \cite{Defant}, the author used a geometric construction of objects called \emph{valid hook configurations} to give an expression for the fertility of any given permutations. The first goal of this paper is to translate that expression into a similar expression that relies on a certain set of compositions that we call \emph{valid compositions}. In fact, for any permutation $\pi$ and nonnegative integer $m$, we will be able to determine, in terms of valid compositions, the number $F(\pi,m)$ of permutations $\sigma$ such that $\sigma$ has $m$ descents and $s(\sigma)=\pi$. We can also determine the number of permutations $\sigma$ such that $\sigma$ has $m$ valleys and $s(\sigma)=\pi$ (a valley of $\sigma=\sigma_1\sigma_2\cdots\sigma_n$ is an index $i\in\{1,2,\ldots,n\}$ such that $\sigma_i<\min\{\sigma_{i-1},\sigma_{i+1}\}$, where we make the convention $\sigma_0=\sigma_{n+1}=\infty$).   

The paper is organized as follows. Section 2 provides a discussion of decreasing binary plane trees and valid hooks configurations, introducing terminology that we will need in subsequent sections. In Section 3, we define the set of valid compositions of a permutation and show how to express $F(\pi)$ and $F(\pi,m)$ as a sum over the valid compositions of $\pi$. In the final section of the paper, we give a short combinatorial proof of an identity involving generalized Narayana numbers. This identity allows us to prove recursive upper bounds for the numbers $W_t(n)$ and $W_t(n,k)$. We then prove that \[\lim_{n\to\infty}\sqrt[n]{W_3(n)}\leq 12.53296\] (improving upon the best known upper bound of $16$) and \[\lim_{n\to\infty}\sqrt[n]{W_4(n)}\leq 21.97225\] (improving upon the best known upper bound of $25$).  

\section{Trees and Hooks}
A decreasing binary plane tree is a rooted binary plane tree whose nodes are labeled with distinct positive integers such that the label of any non-root node is smaller than the label of its parent. 

To read a decreasing binary plane tree in \emph{postorder}, we first read the left subtree of the root, then the right subtree of the root, and finally the root. Each subtree is itself read in postorder. We let $P(\tau)$ denote the postorder reading of a decreasing binary plane tree $\tau$. 

Suppose $Y$ is a collection of decreasing (not necessarily binary) plane trees and $\pi\in S_n$. In \cite{Defant}, the author asked for an expression for the number of trees in $Y$ with postorder $\pi$. Using a geometric construction of objects called valid hook configurations, he gave such an expression for certain collections $Y$. There is a simple bijection \cite{Bona} between the decreasing binary plane trees with postorder $\pi$ and the preimages of $\pi$ under $s$. Therefore, the author was able to apply this method to gain information about the preimages of $\pi$ under $s$. We will restate the relevant definitions and results from that paper for easy reference.  

\begin{figure}[t]
\begin{center} 
\includegraphics[height=2cm]{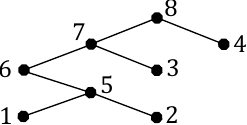}
\end{center}
\captionof{figure}{A decreasing binary plane tree on [8]. The postorder reading of this tree is $12563748$.} \label{Fig3}
\end{figure}

Given a permutation $\pi=\pi_1\pi_2\ldots\pi_n\in S_n$, we obtain a standard diagram for $\pi$ by plotting the points of the form $(i,\pi_i)$ in the plane. A \emph{hook} in this diagram is the union of two line segments. One is a vertical line segment connecting a point $(i,\pi_i)$ to a point $(i,\pi_j)$, where $i<j$ and $\pi_i<\pi_j$. The second is a horizontal line segment connecting the points $(i,\pi_j)$ and $(j,\pi_j)$. One can think of drawing a hook by starting at the point $(i,\pi_i)$, moving upward, and then turning right to meet with the point $(j,\pi_j)$. The point $(i,\pi_i)$ is the \emph{southwest endpoint} of the hook while $(j,\pi_j)$ is the \emph{northeast endpoint} of the hook. We let ${}_eH$ and $H^e$ denote the southwest and northeast endpoints, respectively, of the hook $H$. 

\begin{definition}\label{Def5}
Let $\pi\in S_n$. We say that an $m$-tuple $\mathscr H=(H_1,H_2,\ldots,H_m)$ is a \emph{valid hook configuration of} $\pi$ if $H_1,H_2,\ldots,H_m$ are hooks in the diagram of $\pi$ that satisfy the following properties. 
\begin{enumerate}[(a)]
\item If ${}_eH_\ell=(i_\ell,\pi_{i_\ell})$ for each $\ell\in[m]$, then $i_1<i_2<\cdots<i_m$. 
\item If $i$ is a descent of $\pi$, then $(i,\pi_i)={}_eH_\ell$ for some $\ell\in[m]$. 
\item If $(j,\pi_j)=H_\ell^e$ for some $\ell\in[m]$, then there exist $\ell',\ell''\in[m]$ such that the $x$-coordinate of ${}_eH_{\ell'}$ is a descent of $\pi$, $(j-1,\pi_{j-1})={}_eH_{\ell''}$, and $H_{\ell'}^e=H_{\ell''}^e=(j,\pi_j)$. 
\item If $\ell,\ell'\in [m]$, ${}_eH_\ell=(i,\pi_i)$, $H_\ell^e=(j,\pi_j)$, ${}_eH_{\ell'}=(i',\pi_{i'})$, $H_{\ell'}^e=(j',\pi_{j'})$, $\pi_j\leq\pi_{j'}$, and $\left|[i,j]\cap[i',j']\right|>1$, then $[i,j]\subseteq[i',j']$. 
\end{enumerate}
Let $SW(\mathscr H)=\{{}_eH_1,{}_eH_2,\ldots,{}_eH_m\}$ and $NE(\mathscr H)=\{H_1^e,H_2^e,\ldots,H_m^e\}$. Let $\mathcal H(\pi)$ denote the set of valid hook configurations of $\pi$.   
\end{definition}

Notice that if $\mathscr H\in\mathcal H(\pi)$, then each point in $NE(\mathscr H)$ is the northeast endpoint of at least two hooks: one with southwest endpoint $(j-1,\pi_{j-1})$ and one with southwest endpoint $(i,\pi_i)$ for some descent $i$ of $\pi$. Hence, we have the following definition.  

\begin{definition}\label{Def6}
For $\pi\in S_n$, let $\mathcal H_{\{0,2\}}(\pi)$ be the set of valid hook configurations $\mathscr H\in\mathcal H(\pi)$ in which each point in $NE(\mathscr H)$ is the northeast endpoint of \emph{exactly} two hooks.
\end{definition}

There are some immediate consequences of the criteria in the above definitions that are useful to keep in mind. First, the only way that two hooks can intersect in exactly one point is if that point is the northeast endpoint of one of the hooks and the southwest endpoint of the other. Also, no entry in the diagram can lie above a hook. More formally, if $H_\ell$ is a hook with ${}_eH_\ell=(i,\pi_i)$ and $H_\ell^e=(j,\pi_j)$, then $\pi_k<\pi_j$ for all $k\in\{i+1,i+2,\ldots,j-1\}$. Indeed, suppose instead that $\pi_k>\pi_j$ for some $k\in\{i+1,i+2,\ldots,j-1\}$. Then $\pi$ must have a descent $i'\in\{k,k+1,\ldots,j-1\}$ such that $\pi_{i'}>\pi_j$. According to criterion (b) in the above definition, $(i',\pi_{i'})={}_eH_{\ell'}$ for some hook $H_{\ell'}$. Let $H_{\ell'}^e=(j',\pi_{j'})$. Since $\pi_j<\pi_{i'}<\pi_{j'}$ and $[i',i'+1]\subseteq[i,j]\cap [i',j']$, condition (d) in the above definition states that we must have $[i,j]\subseteq [i',j']$. However, this is impossible because $i<i'$. 

After drawing a diagram of a permutation $\pi$ with a valid hook configuration $\mathscr H=(H_1,H_2,\ldots,$ $H_m)\in\mathcal H(\pi)$, we can color the diagram with $m+1$ colors $c_0,c_1,\ldots,c_m$ as follows. First, if ${}_eH_\ell=(i,\pi_i)$ and $H_\ell^e=(j,\pi_j)$, then we refer to the line segment connecting the points $(i+1/2,\pi_j)$ and $(j,\pi_j)$ as the \emph{top part} of the hook $H_\ell$. Assign $H_\ell$ the color $c_\ell$ for each $\ell\in[m]$. Color each point $(k,\pi_k)$ as follows. Start at $(k,\pi_k)$, and move directly upward until hitting the top part of a hook. Color $(k,\pi_k)$ the same color as the hook that you hit. If you hit multiple hooks at once, use the color of the hook that was hit whose southwest endpoint if farthest to the right. If you do not hit the top part of any hook, give $(k,\pi_k)$ the color $c_0$. Note that if $(k,\pi_k)={}_eH_\ell$ for some $\ell\in m$, then we ignore the hook $H_\ell$ while moving upward from $(k,\pi_k)$ to find the hook that is to lend its color to $(k,\pi_k)$. Moreover, if $(k,\pi_k)\in NE(\mathscr H)$, then we give $(k,\pi_k)$ the color $c_r$, where $r$ is the largest element of $[m]$ such that $(k,\pi_k)=H_r^e$. 

\begin{example}\label{Exam2}
Figure \ref{Fig2} depicts the colored diagram obtained from a valid hook configuration $\mathscr H$ of the permutation $\pi=2.7.3.5.9.10.11.4.8.1.6.12.13.14.15.16$. In this example, $NE(\mathscr H)=\{(7,11), (13,13),$ $(15,15)\}$. This valid hook configuration is an element of $\mathcal H_{\{0,2\}}(\pi)$ because each of the points in $NE(\mathscr H)$ is the northeast endpoint of exactly two hooks.    

\begin{figure}[t]
\begin{center} 
\includegraphics[height=6.0cm]{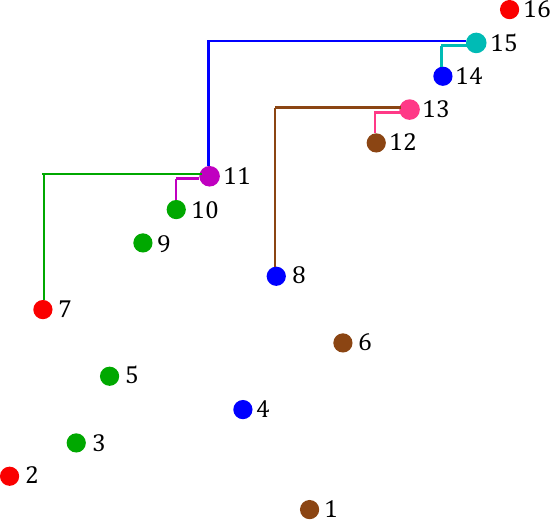}
\end{center}
\captionof{figure}{The colored diagram arising from a valid hook configuration $\mathscr H\in\mathcal H_{\{0,2\}}(\pi)$.}\label{Fig2}
\end{figure}
\end{example}

\begin{definition}\label{Def7}
If $\pi\in S_n$, then each valid hook configuration $\mathscr H=(H_1,H_2,\ldots,H_m)\in\mathcal H(\pi)$ partitions $[n]$ into color classes. Let $Q_t(\mathscr H)$ be the set of entries $\pi_\ell$ such that  $(\ell,\pi_\ell)$ is assigned the color $c_t$. Let $q_t(\mathscr H)=\vert Q_t(\mathscr H)\vert$ so that $(q_0(\mathscr H),q_1(\mathscr H),\ldots,q_m(\mathscr H))$ is a composition of $n$ into $m+1$ parts. Let $\vert \mathscr H\vert=m$ denote the number of hooks in the valid hook configuration $\mathscr H$. 
\end{definition}

\begin{remark}\label{Rem6}
Suppose $\mathscr H\in\mathcal H(\pi)$ and $(j,\pi_j)\in NE(\mathscr H)$. If $(j,\pi_j)$ is given the color $c_t$ in the colored diagram of $\pi$ induced by $\mathscr H$, then $Q_t(\mathscr H)=\{\pi_j\}$ and $q_t(\mathscr H)=1$. Indeed, the hook colored $c_t$ is the hook with southwest endpoint $(j-1,\pi_{j-1})$ and northeast endpoint $(j,\pi_j)$. 
\end{remark}

If $\mathscr H$ is a valid hook configuration of $\pi$, let $\Theta(\mathscr H)$ be the set of all $i\in\{0,1,\ldots,\vert\mathscr H\vert\}$ such that the color $c_i$ is not used in the colored diagram induced by $\mathscr H$ to color a point in $NE(\mathscr H)$. We let $\widehat{\vert\mathscr H\vert}=\vert\Theta(\mathscr H)\vert-1=\vert \mathscr H\vert-\vert NE(\mathscr H)\vert$. For example, the colors $c_2$, $c_5$, and $c_6$ (purple, pink, and teal) are used in Example \ref{Exam2} to colors the points in $NE(\mathscr H)$. Therefore, in that example, we have $\Theta(\mathscr H)=\{0,1,3,4\}$ and $\widehat{\vert\mathscr H\vert}=3$. 
If $\Theta(\mathscr H)=\left\{i_0,i_1,\ldots,i_{\widehat{\vert\mathscr H\vert}}\right\}$, where $i_0<i_1<\cdots<i_{\widehat{\vert\mathscr H\vert}}$, then we let $\widehat q_t(\mathscr H)=q_{i_t}(\mathscr H)$. In other words, the tuple $\left(\widehat q_0(\mathscr H),\widehat q_1(\mathscr H),\ldots,\widehat q_{\widehat{\vert \mathscr H\vert}}(\mathscr H)\right)$ is obtained by starting with the tuple $\left(q_0(\mathscr H),q_1(\mathscr H),\ldots,q_{\vert\mathscr H\vert}(\mathscr H)\right)$
and removing all of the coordinates $q_i(\mathscr H)$ such that the color $c_i$ is used to color a point in $NE(\mathscr H)$. 

\begin{remark}\label{Rem7}
If $\pi\in S_n$ has exactly $k$ descents and $\mathscr H\in\mathcal H_{\{0,2\}}(\pi)$, then  there are exactly $k$ elements of $NE(\mathscr H)$, and there are $2k$ hooks in $\mathscr H$. Therefore, $\vert\widehat{\mathscr H}\vert=\vert\Theta(\mathscr H)\vert-1=\vert \mathscr H\vert-\vert NE(\mathscr H)\vert=2k-k=k$.  
\end{remark}

Let $C_i=\frac{1}{i+1}{2i\choose i}$ be the $i^\text{th}$ Catalan number. Let $N(i,j)=\frac{1}{i}{i\choose j}{i\choose j-1}$ be a Narayana number. The following lemmas are Theorem 5.1, Theorem 5.2, and Corollary 5.1 in \cite{Defant}.   

\begin{lemma}\label{Lem15}
If $\pi\in S_n$ is a permutation with exactly $k$ descents, then the number of permutations $\sigma\in S_n$ such that $s(\sigma)=\pi$ is given by \[F(\pi)=\sum_{\mathscr H\in\mathcal H_{\{0,2\}}(\pi)}\prod_{t=0}^kC_{\widehat{q}_t(\mathscr H)}.\]
\end{lemma}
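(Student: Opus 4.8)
The plan is to pass from preimages to decreasing binary plane trees and then to set up an explicit bijection whose fibers are counted by products of Catalan numbers.

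First I would isolate the elementary fact that accounts for the Catalan factors: for an \emph{increasing} sequence $\rho=\rho_1\rho_2\cdots\rho_m$ of distinct positive integers, the number $g(\rho)$ of decreasing binary plane trees $\tau$ with postorder $P(\tau)=\rho$ equals $C_m$. The root of such a $\tau$ must carry the largest, hence last, entry $\rho_m$; the postorders of its left and right subtrees form a splitting $\rho_1\cdots\rho_a\mid\rho_{a+1}\cdots\rho_{m-1}$ into two still-increasing contiguous blocks; and the two subtrees may be chosen independently. Hence $g(\rho)=\sum_{a=0}^{m-1}g(\rho_1\cdots\rho_a)\,g(\rho_{a+1}\cdots\rho_{m-1})$, and induction on $m$ yields the Catalan recursion, so $g(\rho)=C_m$. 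Since, by the bijection recalled above, $F(\pi)$ equals the number of decreasing binary plane trees with postorder $\pi$, it now suffices to enumerate the latter.

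The heart of the argument would be a bijection $\Psi$ sending a decreasing binary plane tree $\tau$ with $P(\tau)=\pi$ to a pair $\big(\mathscr H,(\tau_0,\tau_1,\ldots,\tau_{2k})\big)$, where $\mathscr H=(H_1,\ldots,H_{2k})\in\mathcal H_{\{0,2\}}(\pi)$ and each $\tau_t$ is a decreasing binary plane tree whose postorder is the increasing rearrangement of the color class $Q_t(\mathscr H)$. The configuration $\mathscr H$ is read off the skeleton of $\tau$: if $i$ is a descent of $\pi$, then $\pi_i$ is forced to be the root of a proper subtree $S_i$ of $\tau$ (the one whose postorder occupies the positions $\le i$ lying in $S_i$), and to $S_i$ I would attach a hook with southwest endpoint $(i,\pi_i)$ rising to the point $(j,\pi_j)$ that marks where the parent structure of $S_i$ resumes in postorder, together with a companion hook from $(j-1,\pi_{j-1})$ to the same point $(j,\pi_j)$. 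The subtrees $\{S_i\}$ are pairwise nested or disjoint, which is what should force axioms (a)--(d) of Definition~\ref{Def5} and the ``exactly two hooks'' condition of Definition~\ref{Def6}. After deleting the hooks, the color classes are precisely the maximal blocks of consecutive positions whose entries occupy a common region of $\tau$ carved out by this skeleton; in such a region the entries appear in increasing order and can be reassembled as an \emph{arbitrary} decreasing binary plane tree, giving $\tau_t$. The inverse of $\Psi$ takes $\mathscr H$, rebuilds the forced nested skeleton of subtrees dictated by the hooks, and plugs the $\tau_t$ into the vacant regions.

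Granting that $\Psi$ is a well-defined bijection, the fiber over a fixed $\mathscr H$ has cardinality $\prod_{t=0}^{2k}C_{q_t(\mathscr H)}$ by the first paragraph, so $F(\pi)=\sum_{\mathscr H\in\mathcal H_{\{0,2\}}(\pi)}\prod_{t=0}^{2k}C_{q_t(\mathscr H)}$. To recover the stated formula I would then invoke Remark~\ref{Rem6}: each color $c_t$ that colors a point of $NE(\mathscr H)$ has $q_t(\mathscr H)=1$ and thus contributes a factor $C_1=1$; by Remark~\ref{Rem7} there are exactly $k$ such colors, and discarding them converts $(q_0(\mathscr H),\ldots,q_{2k}(\mathscr H))$ into $(\widehat q_0(\mathscr H),\ldots,\widehat q_k(\mathscr H))$ without altering the product. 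This gives $F(\pi)=\sum_{\mathscr H\in\mathcal H_{\{0,2\}}(\pi)}\prod_{t=0}^{k}C_{\widehat q_t(\mathscr H)}$. I expect the main obstacle to be the middle step: showing that $\Psi$ actually lands in $\mathcal H_{\{0,2\}}(\pi)$ --- the nesting axiom (d) in particular requires a delicate analysis of how postorder interleaves the subtrees $S_i$ --- and, conversely, that every configuration in $\mathcal H_{\{0,2\}}(\pi)$ is hit, with its color classes behaving as independent ``increasing slots.'' Once that structural dictionary is in place, the Catalan count on each slot and the passage from $q_t$ to $\widehat q_t$ are routine.
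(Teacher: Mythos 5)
First, note that the paper does not prove Lemma \ref{Lem15} at all: it is imported verbatim as Theorem 5.1 of \cite{Defant}, so what you are really attempting is a reproof of that cited result, along the lines of the correspondence alluded to in Remark \ref{Rem8}. Your outline follows the right strategy (pass to decreasing binary plane trees with postorder $\pi$, fiber them over $\mathcal H_{\{0,2\}}(\pi)$, count each fiber by products of Catalan numbers), and two of your steps are genuinely complete: the count $g(\rho)=C_m$ for trees whose postorder is an increasing word, and the final bookkeeping step that uses Remarks \ref{Rem6} and \ref{Rem7} to drop the $k$ singleton color classes ($C_1=1$) and pass from $(q_0,\ldots,q_{2k})$ to $(\widehat q_0,\ldots,\widehat q_k)$.

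However, there is a genuine gap, and you name it yourself: everything hinges on the bijection $\Psi$, and you only assert it (``granting that $\Psi$ is a well-defined bijection\ldots''). The entire content of the lemma lives in exactly the points you defer: (i) that the hooks you read off a tree $\tau$ with $P(\tau)=\pi$ are well defined at all --- your description ``the point $(j,\pi_j)$ that marks where the parent structure of $S_i$ resumes in postorder'' does not pin down a hook, nor why $(j-1,\pi_{j-1})$ is the southwest endpoint of a companion hook to the same $(j,\pi_j)$; (ii) that the resulting configuration satisfies conditions (a)--(d) of Definition \ref{Def5} and the exactly-two-hooks condition of Definition \ref{Def6}; (iii) that the map is injective and, harder, surjective, i.e.\ that every $\mathscr H\in\mathcal H_{\{0,2\}}(\pi)$ arises and that its fiber is a free product of choices, one tree per color class; and (iv) that each color class $Q_t(\mathscr H)$ is an increasing subsequence of $\pi$ --- without (iv) the factor for that class is not $C_{q_t(\mathscr H)}$, so the Catalan count collapses. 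As written, the argument establishes the easy reductions surrounding the theorem but not the theorem itself; to make it a proof you would have to carry out the structural analysis of \cite{Defant} (or cite it, which is what the paper does).
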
 

\begin{lemma}\label{Lem16}
If $\pi\in S_n$ has exactly $k$ descents, then the number of permutations $\sigma\in S_n$ that have exactly $m$ descents and satisfy $s(\sigma)=\pi$ is given by \[F(\pi,m)=\sum_{\substack{\mathscr H\in\mathcal H_{\{0,2\}}(\pi)\\ j_0+j_1+\cdots+j_k=m-k}}\prod_{t=0}^kN(\widehat{q}_t(\mathscr H),j_t+1),\] where the numbers $j_0,j_1,\ldots,j_k$ in the sum are nonnegative integers that sum to $m-k$. 
\end{lemma}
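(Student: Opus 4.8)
The plan is to promote the bijective proof of Lemma \ref{Lem15} from \cite{Defant} to a descent-refined statement. Recall that the proof of Lemma \ref{Lem15} rests on a bijection between the set $\{\sigma\in S_n:s(\sigma)=\pi\}$ of preimages of $\pi$ and the disjoint union, over all $\mathscr H\in\mathcal H_{\{0,2\}}(\pi)$, of the sets of $(k+1)$-tuples $(\mu^{(0)},\mu^{(1)},\ldots,\mu^{(k)})$ in which $\mu^{(t)}$ is a sortable permutation of length $\widehat q_t(\mathscr H)$ for each $t\in\{0,1,\ldots,k\}$ (equivalently, via the standard bijection with decreasing binary plane trees, a tree with postorder $\pi$ is reconstructed from its valid-hook ``skeleton'' $\mathscr H$ together with an independent choice of decreasing binary subtree on each of the $k+1$ color classes indexed by $\Theta(\mathscr H)$). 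The entries of $\mu^{(t)}$ record the relative order of the entries of $\sigma$ lying in the color class $Q_{i_t}(\mathscr H)$, and the Catalan factor $C_{\widehat q_t(\mathscr H)}$ in Lemma \ref{Lem15} is exactly the number of sortable permutations of length $\widehat q_t(\mathscr H)$, since the sortable permutations of length $i$ are precisely the $231$-avoiding permutations of $[i]$.

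The heart of the argument is to show that under this bijection
\[
\operatorname{des}(\sigma)=\bigl\lvert NE(\mathscr H)\bigr\rvert+\sum_{t=0}^{k}\operatorname{des}\bigl(\mu^{(t)}\bigr)=k+\sum_{t=0}^{k}\operatorname{des}\bigl(\mu^{(t)}\bigr),
\]
where the second equality is Remark \ref{Rem7}. To prove the first equality, I would classify each descent of $\sigma$ as either \emph{internal} to a single color class or else \emph{straddling} two color classes, and argue: (i) because the insertion of entries into a color class $Q_{i_t}(\mathscr H)$ is order-isomorphic to $\mu^{(t)}$, the internal descents in that class are in natural bijection with the descents of $\mu^{(t)}$, so they contribute $\operatorname{des}(\mu^{(t)})$; and (ii) the straddling descents are in bijection with the points of $NE(\mathscr H)$ — specifically, the entry $\pi_j$ of each point $(j,\pi_j)\in NE(\mathscr H)$ is, in $\sigma$, immediately followed by a smaller entry (this uses criterion (c) of Definition \ref{Def5} together with Remark \ref{Rem6} and the ``no entry lies above a hook'' consequence of Definition \ref{Def5}), contributing exactly one descent apiece, while no straddling descent occurs elsewhere. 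This bookkeeping is where the geometry of valid hook configurations does all the work, and I expect it to be the main obstacle; everything else is formal.

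Granting the displayed identity, the lemma follows by a short computation. The number of sortable permutations of length $i$ with exactly $j$ descents is the Narayana number $N(i,j+1)$ (the classical descent refinement of the Catalan enumeration of $231$-avoiding permutations), so for a fixed $\mathscr H$ the number of tuples $(\mu^{(0)},\ldots,\mu^{(k)})$ with $\operatorname{des}(\mu^{(t)})=j_t$ for each $t$ is $\prod_{t=0}^{k}N(\widehat q_t(\mathscr H),j_t+1)$. By the displayed identity, such a tuple (together with $\mathscr H$) corresponds to a preimage $\sigma$ with exactly $m$ descents if and only if $j_0+j_1+\cdots+j_k=m-k$. Summing over all $\mathscr H\in\mathcal H_{\{0,2\}}(\pi)$ and over all $(k+1)$-tuples $(j_0,\ldots,j_k)$ of nonnegative integers with $\sum_t j_t=m-k$ therefore gives
\[
F(\pi,m)=\sum_{\substack{\mathscr H\in\mathcal H_{\{0,2\}}(\pi)\\ j_0+j_1+\cdots+j_k=m-k}}\prod_{t=0}^{k}N\bigl(\widehat q_t(\mathscr H),j_t+1\bigr),
\]
as claimed. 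As a consistency check, summing this over all $m$ and using $\sum_{j\ge 0}N(i,j+1)=C_i$ recovers Lemma \ref{Lem15}.
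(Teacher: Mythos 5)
The paper itself offers no proof of Lemma \ref{Lem16}: it is imported verbatim from \cite{Defant} (Theorem 5.2 there), so the only comparison available is with that source's method. Your strategy --- refine the decomposition underlying Lemma \ref{Lem15} (equivalently, the correspondence of Remark \ref{Rem8} between decreasing binary plane trees with postorder $\pi$ and pairs $(\mathscr H,\mathscr T)$) by tracking descents, and then invoke the classical fact that $231$-avoiding (i.e.\ sortable) permutations of length $i$ with $j$ descents are counted by $N(i,j+1)$ --- is the right one, and it is essentially how the cited paper proceeds: under the tree bijection, descents of a preimage $\sigma$ become right edges of its decreasing binary plane tree, the skeleton determined by $\mathscr H$ accounts for exactly $k$ of them, and each color class contributes the right edges of its freely chosen component tree, which is exactly your identity $\operatorname{des}(\sigma)=\lvert NE(\mathscr H)\rvert+\sum_{t}\operatorname{des}(\mu^{(t)})$. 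Granting that identity, your Narayana bookkeeping, the summation over weak compositions of $m-k$, and the consistency check $\sum_{j\geq 0}N(i,j+1)=C_i$ are all correct.

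The genuine gap is precisely the step you flag and then defer: the displayed identity is the entire content of the lemma beyond formal manipulation, and neither (i) nor (ii) of your plan is actually established. Claim (i) silently assumes that two entries of a color class that are adjacent in $\mu^{(t)}$ are adjacent in $\sigma$ (or at least that interleaving by entries of other classes neither creates nor destroys descents), and claim (ii) asserts both that each entry $\pi_j$ with $(j,\pi_j)\in NE(\mathscr H)$ is immediately followed in $\sigma$ by a smaller entry and, just as importantly, that no other cross-class adjacency in $\sigma$ is a descent. None of this can be checked from the material at hand, because the bijection itself --- how the component trees are attached along the hooks, hence where the entries of each class sit in the in-order reading $\sigma$ --- is constructed only in \cite{Defant} and is reproduced neither in this paper nor in your argument. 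To turn your outline into a proof you would have to restate that construction (or the tree formulation of Remark \ref{Rem8}) explicitly and verify the descent accounting against it; as written, the proposal identifies the correct mechanism but proves only the formal shell around it.
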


\begin{lemma}\label{Lem17}
If $\pi\in S_n$ has exactly $k$ descents, then the number of permutations $\sigma\in S_n$ that have exactly $m$ valleys and satisfy $s(\sigma)=\pi$ is given by
\[2^{n-2m+1}\sum_{\substack{\mathscr H\in\mathcal H_{\{0,2\}}(\pi)\\j_0+j_1+\cdots+j_k=m}}\prod_{t=0}^k {\widehat q_t(\mathscr H)-1\choose 2j_t-2}C_{j_t-1},\] where the numbers $j_0,j_1,\ldots,j_k$ in the sum are nonnegative integers that sum to $m$.  
\end{lemma}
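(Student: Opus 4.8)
The plan is to prove Lemma \ref{Lem17} by the tree-counting method that gives Lemmas \ref{Lem15} and \ref{Lem16}. First I would move from permutations to trees: by the bijection recalled above, the permutations $\sigma\in S_n$ with $s(\sigma)=\pi$ are in bijection with the decreasing binary plane trees $T$ satisfying $P(T)=\pi$, where $\sigma$ is recovered as the in-order (left subtree, then root, then right subtree) reading of $T$. I would then record how the valley statistic transfers: an index $i\in\{2,\dots,n-1\}$ is a valley of $\sigma$ if and only if the corresponding node of $T$ is a leaf, so $\sigma$ has exactly $m$ valleys precisely when $T$ has exactly $m$ leaves other than the first and last nodes in the in-order reading. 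Hence Lemma \ref{Lem17} becomes a statement about enumerating decreasing binary plane trees with postorder $\pi$ according to this ``interior-leaf'' statistic.

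Second, I would feed this into the valid-hook-configuration decomposition underlying Lemmas \ref{Lem15} and \ref{Lem16}: a decreasing binary plane tree with postorder $\pi$ is equivalent to the data of an $\mathscr H\in\mathcal H_{\{0,2\}}(\pi)$ together with, for each of the $k+1$ color classes $Q_{i_0}(\mathscr H),\dots,Q_{i_k}(\mathscr H)$ surviving the hat operation, a ``local'' tree structure on that class, where a class of size $q$ carries $C_q$ such local trees. This is precisely why $F(\pi)=\sum_{\mathscr H}\prod_{t=0}^k C_{\widehat q_t(\mathscr H)}$, and why refining each local tree by its number of descents produces the Narayana factors of Lemma \ref{Lem16}, the remaining $k$ descents appearing at the northeast endpoints. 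Running the same bookkeeping with the leaf statistic requires two ingredients. The first is a local enumeration: the number of local trees on a class of size $q$ having exactly $j$ leaves should be $2^{q-2j+1}\binom{q-1}{2j-2}C_{j-1}$, and summing this over $j$ recovers $C_q$, so it is consistent with Lemma \ref{Lem15}; one convenient route to the identity is to biject binary plane trees on $q$ nodes with bicolored Motzkin paths of length $q-1$ (the factor $C_{j-1}$ recording the shape of the up- and down-steps, the binomial placing those $2j-2$ steps among the $q-1$ positions, and the factor $2^{q-2j+1}$ two-coloring the level steps), the number of leaves matching the number of up-steps up to a shift by one. The second ingredient is to determine how the interior-leaf count of the reassembled tree decomposes over the color classes, i.e.\ to identify the local statistic $j_t$ for which the total number of valleys of $\sigma$ equals $j_0+\dots+j_k$; one then collects the powers of two via \[\sum_{t=0}^k\bigl(\widehat q_t(\mathscr H)-2j_t+1\bigr)=(n-k)-2m+(k+1)=n-2m+1,\] using that the $\widehat q_t(\mathscr H)$ sum to $n-k$ (each color class removed by the hat operation has size $1$, by Remark \ref{Rem6}) and that $j_0+\dots+j_k=m$, which yields the global prefactor $2^{n-2m+1}$ and the stated sum over compositions.

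The main obstacle is this second ingredient: identifying the correct per-class statistic and proving that the valleys of $\sigma$ are counted exactly by its sum. The analogous fact for descents behind Lemma \ref{Lem16} was already delicate — there the seams of the decomposition (the northeast endpoints together with the entries immediately to their left, which are descent tops of $\pi$) contribute precisely $k$ descents — and for valleys one must instead show that these seams, and also the two extreme in-order nodes of the reassembled tree, are never interior leaves, so that no analogous shift appears. Establishing this calls for a careful analysis of how the decreasing binary plane tree is glued together from $\mathscr H$ and the local trees; once the right local statistic is pinned down, the local enumeration and the final binomial-and-Catalan arithmetic should go through routinely, in the style of \cite{Defant}.
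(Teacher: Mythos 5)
First, a point of comparison: the paper does not prove Lemma \ref{Lem17} at all --- it is imported verbatim, together with Lemmas \ref{Lem15} and \ref{Lem16}, from \cite{Defant} (Theorems 5.1, 5.2 and Corollary 5.1 there). So the only question is whether your sketch stands on its own, and at present it does not. The sound parts are these: preimages of $\pi$ under $s$ do correspond to decreasing binary plane trees with postorder $\pi$, with $\sigma$ recovered as the in-order reading; an interior position of $\sigma$ is a valley exactly when the corresponding node is a leaf; your local count $2^{q-2j+1}\binom{q-1}{2j-2}C_{j-1}$ of binary plane trees with $q$ nodes and $j$ leaves is correct (it refines Touchard's identity, hence sums to $C_q$); and the exponent bookkeeping $\sum_{t}\bigl(\widehat q_t(\mathscr H)-2j_t+1\bigr)=n-2m+1$ is right.

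The genuine gap is the step you explicitly defer, and it is the entire content of the lemma: determining how the leaf statistic of the reassembled tree distributes over the $k+1$ color classes of $\mathscr H$. Moreover, the target you set for that step --- that the seams and the two extreme in-order nodes are never interior leaves, ``so that no analogous shift appears'' --- is not the right one. Test the displayed formula on $\pi=123\cdots n$: there $k=0$, the unique $\mathscr H$ is empty, $\widehat q_0(\mathscr H)=n$, and the right-hand side is $2^{n-2m+1}\binom{n-1}{2m-2}C_{m-1}$, which vanishes at $m=0$ (the binomial is $\binom{n-1}{-2}$); for $n=3$ it gives $4$ and $1$ at $m=1,2$, whereas the five preimages $123,132,213,312,321$ have $3,2,0$ members with $0,1,2$ interior valleys, but exactly $4,1$ members whose decreasing binary tree has $1,2$ leaves in total. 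So the sum enumerates preimages by the \emph{total} number of leaves (equivalently, valleys when $\sigma$ is padded with $\infty$ at both ends), and a proof along your lines must allow the extreme in-order positions and the class boundaries to contribute to the statistic and reconcile that with the valley convention in the statement --- precisely the gluing analysis and per-class statistic you have not pinned down. Until that is carried out (or the result is simply cited from \cite{Defant}, as the paper does), the proposal is a plausible plan rather than a proof.
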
    

\section{Valid Compositions} 
Throughout this section, let $\pi=\pi_1\pi_2\cdots\pi_n\in S_n$ be a permutation with exactly $k$ descents. Let $d_1,d_2,\ldots,d_k$ be the descents of $\pi$ in increasing order. We convene to let $d_0=0$ and $d_{k+1}=n$.

Any valid hook configuration in $\mathcal H_{\{0,2\}}(\pi)$ is uniquely determined by the $k$-tuple $(b_1,b_2,\ldots,b_k)$, where $b_i$ is the index such that $(b_i,\pi_{b_i})$ is the northeast endpoint of the hook whose southwest endpoint is $(d_i,\pi_{d_i})$. We call $(b_1,b_2,\ldots,b_k)$ the $k$-tuple corresponding to the valid hook configuration. For an explicit example, consider the permutation and the valid hook configuration in Example \ref{Exam2}. This permutation has $k=3$ descents, which are $d_1=2$, $d_2=7$, and $d_3=9$. The point $(d_1,\pi_{d_1})=(2,7)$ is the southwest endpoint of a hook whose northeast endpoint is $(7,11)$, so $b_1=7$. Similarly, $b_2=15$ and $b_3=13$. 

Let $\text{Comp}_a(b)$ denote the set of all compositions of $b$ into $a$ parts. It follows from Remarks \ref{Rem6} and \ref{Rem7} that there is a map $\varphi\colon\mathcal H_{\{0,2\}}(\pi)\to \text{Comp}_{k+1}(n-k)$ given by 
\begin{equation}\label{Eq12}
\varphi(\mathscr H)=(\widehat q_0(\mathscr H),\widehat q_1(\mathscr H),\ldots,\widehat q_k(\mathscr H)).
\end{equation}
The injectivity of $\varphi$, which we shall prove in the following lemma, will be the cornerstone for our subsequent proofs. 

\begin{lemma}\label{Lem1}
The map $\varphi\colon\mathcal H_{\{0,2\}}(\pi)\to \text{Comp}_{k+1}(n-k)$ defined by $\varphi(\mathscr H)=(\widehat q_0(\mathscr H),\ldots,\widehat q_k(\mathscr H))$ is injective. 
\end{lemma}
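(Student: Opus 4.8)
The plan is to recover a valid hook configuration $\mathscr H \in \mathcal H_{\{0,2\}}(\pi)$ from its image $\varphi(\mathscr H) = (\widehat q_0(\mathscr H), \ldots, \widehat q_k(\mathscr H))$ by reconstructing the corresponding $k$-tuple $(b_1, \ldots, b_k)$ one coordinate at a time, working from the first descent to the last. Recall that a valid hook configuration in $\mathcal H_{\{0,2\}}(\pi)$ is determined by this $k$-tuple, so it suffices to show that the $b_i$ are forced by the composition. The key structural fact I would exploit is that the color classes, read in the order of the entries $\pi_1, \pi_2, \ldots, \pi_n$, appear in a predictable pattern: by criterion (a) the southwest endpoints occur left to right in the order $d_1, d_2, \ldots, d_k$, and the coloring rule colors each point by the top part of the hook directly above it, with the ``no entry lies above a hook'' observation (proved in the excerpt) ensuring the color classes are organized into contiguous-in-$x$ blocks governed by the nesting condition (d).

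First I would set up the reconstruction for $b_1$. The hook $H$ with southwest endpoint $(d_1, \pi_{d_1})$ has some northeast endpoint $(b_1, \pi_{b_1})$; since $d_1$ is the first descent, the entries $\pi_1, \ldots, \pi_{d_1}$ form an ascending run, and by criterion (d) together with the no-entry-above-a-hook property, the color class $c_0$ consists precisely of the entries $\pi_1, \ldots, \pi_{d_1 - 1}$ together with the entries sitting ``outside'' all hooks to the right. More carefully: I would argue that $\widehat q_0(\mathscr H)$ is determined by, and determines, $b_1$ — the entries colored $c_0$ that lie among positions $1, \ldots, b_1$ are exactly $\pi_1, \ldots, \pi_{d_1-1}$ plus possibly $\pi_{b_1}$ itself if it is not in $NE(\mathscr H)$, but in fact positions $d_1+1, \ldots, b_1-1$ are colored by hooks nested inside $H$ or by $H$ itself, so they are not $c_0$; and position $b_1$, being in $NE(\mathscr H)$, is colored by a hook used for a northeast endpoint, hence excluded from $\widehat q_0$. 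Tracking this bookkeeping precisely shows $b_1$ is the unique index making the count of $c_0$-colored entries in the appropriate prefix equal to $\widehat q_0(\mathscr H)$.

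Then I would proceed inductively: having determined $b_1, \ldots, b_{i-1}$ (hence the hooks $H_1, \ldots$ with those southwest endpoints and the colors they contribute), the region of the diagram still ``available'' for the hook emanating from $d_i$ is determined, and $\widehat q_i(\mathscr H)$ — the size of the first color class not yet used for a northeast endpoint that is introduced when scanning past $d_i$ — pins down $b_i$ by the same local counting argument. The main obstacle I anticipate is the bookkeeping in the induction: precisely specifying which positions contribute to $\widehat q_i(\mathscr H)$, and proving that as $b_i$ ranges over its possible values this quantity is strictly monotonic (so that the value of $\widehat q_i(\mathscr H)$ determines $b_i$ uniquely), requires carefully invoking the nesting condition (d) to control how hooks emanating from later descents $d_{i+1}, \ldots, d_k$ can or cannot sit underneath the hook from $d_i$, and handling the edge cases where $b_i$ coincides with a descent top or where the hook from $d_i$ shares its northeast endpoint with the hook from $d_{i-1}$ (as happens at the ``$(j-1, \pi_{j-1})$'' point in criterion (c)). Once that monotonicity/uniqueness is established at each step, injectivity of $\varphi$ follows immediately.
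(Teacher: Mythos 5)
Your overall strategy---recover the $k$-tuple $(b_1,\ldots,b_k)$ from the composition---matches the paper's, but the direction in which you run the reconstruction introduces a genuine gap, and it occurs already at your first step. You claim that ``$\widehat q_0(\mathscr H)$ is determined by, and determines, $b_1$,'' with $\widehat q_0$ computed as a count of $c_0$-colored entries in a prefix ending near $b_1$. This is false: the color class $Q_0(\mathscr H)$ is not confined to any prefix. It consists of \emph{all} entries that do not lie beneath the top part of any hook, which includes entries far to the right of $b_1$ (for instance the tail of the last ascending run above all hooks, and entries in gaps between outermost hooks), and which entries these are depends on the later choices $b_2,\ldots,b_k$. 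Concretely, take $\pi=214356$, with descents $d_1=1$, $d_2=3$. The configurations with $(b_1,b_2)=(3,6)$ and $(b_1,b_2)=(6,5)$ are both in $\mathcal H_{\{0,2\}}(\pi)$ and both have $\widehat q_0=1$ (only $\pi_1$ is colored $c_0$), yet $b_1$ differs; their compositions are $(1,1,2)$ and $(1,2,1)$. Conversely, the configurations $(b_1,b_2)=(3,5)$ and $(3,6)$ share $b_1=3$ but have $\widehat q_0=2$ and $\widehat q_0=1$ respectively. So $\widehat q_0$ neither determines nor is determined by $b_1$, and the same obstruction infects your inductive step: $\widehat q_i$, the size of the class of the hook emanating from $d_i$, depends on which entries are absorbed by hooks from the \emph{later} descents nested beneath it, so it cannot be read off before $b_{i+1},\ldots,b_k$ are known, and the monotonicity you hope to prove in the left-to-right order is not available.

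The paper's proof runs the induction in the opposite order, and this is exactly what makes the bookkeeping trivial rather than delicate. Since $d_k$ is the last descent, nothing can interfere underneath its hook except the singleton class at its northeast endpoint, so the class of that hook is precisely the interval of positions $d_k+1,\ldots,b_k-1$ and one gets $b_k=d_k+\widehat q_k(\mathscr H)+1$ immediately. Then, knowing $b_{\ell+1},\ldots,b_k$, the hook from $d_\ell$ must lie above exactly $\widehat q_\ell(\mathscr H)$ points that are not beneath the later hooks and are not their northeast endpoints, which pins down $b_\ell$; descending through $\ell=k,k-1,\ldots,1$ recovers the whole tuple. If you want to salvage your write-up, reverse the order of your induction; as written, the argument's base case fails on the example above.
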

\begin{proof}
Suppose we are given the composition $\varphi(\mathscr H)=(\widehat q_0(\mathscr H),\widehat q_1(\mathscr H),\ldots,\widehat q_k(\mathscr H))$. Let $(b_1,b_2,\ldots,$ $b_k)$ be the $k$-tuple corresponding to $\mathscr H$. In $\mathscr H$, let $\widehat H_i$ be the hook whose southwest endpoint is $(d_i,\pi_{d_i})$ and whose northeast endpoint is $(b_i,\pi_{b_i})$. We can determine the value of $b_k$ by noting that $b_k=d_k+\widehat q_k(\mathscr H)+1$. Indeed, $\widehat H_k$ must lie above exactly $\widehat q_k(\mathscr H)$ points to the right of $(d_k,\pi_{d_k})$. Similarly, $\widehat H_{k-1}$ must lie above $\widehat q_{k-1}(\mathscr H)$ points that do not lie below $\widehat H_k$ and are not $(b_k,\pi_{b_k})$. Therefore, $b_{k-1}$ is uniquely determined. In general, $b_\ell$ is determined once we know $b_{\ell+1},b_{\ell+2},\ldots,b_k$.  Consequently, $(b_1,b_2,\ldots,b_k)$ is uniquely determined by $\varphi(\mathscr H)$. It follows that $\mathscr H$ is uniquely determined.     
\end{proof}

\begin{definition}\label{Def1}
Let $V(\pi)=\varphi(\mathcal H_{\{0,2\}}(\pi))$. We say that a composition in $V(\pi)$ is a \emph{valid composition of $\pi$}.
\end{definition}

In order to produce a means of determining whether or not a given permutation is sorted, Bousquet-M\'elou introduced the notion of a \emph{canonical tree} of a permutation \cite{Bousquet00}. She then showed that a permutation is sorted if and only if it has a canonical tree. Each sorted permutation has a unique canonical tree, and any two permutations whose canonical trees have the same shape have the same fertility. Because of this last observation, Bousquet-M\'elou mentioned that it would be interesting to find a method for determining the fertility of a permutation from the shape of its canonical tree. This is precisely what we shall do. However, we will translate the notion of a canonical tree into the language of valid hook configurations by defining a \emph{canonical valid hook configuration} $\mathscr H^*=(H_1^*,H_2^*,\ldots,H_{2k}^*)\in\mathcal H_{\{0,2\}}(\pi)$.  

The construction of $\mathscr H^*$ is quite simple. The idea is essentially to choose each northeast endpoint ``minimally," although we need to be a bit careful when doing so. Implicit in what follows is the assumption that $\mathcal H_{\{0,2\}}(\pi)$ is nonempty (so each choice of a northeast endpoint will be possible). We are going to construct $\mathscr H^*$ by building its corresponding $k$-tuple, which we denote $\bm{b}^*=(b_1^*,b_2^*,\ldots,b_k^*)$.  

We will define the entries of $\bm{b}^*$ in the order $b_k^*,b_{k-1}^*,\ldots,b_1^*$. Suppose we have already chosen $b_k^*,b_{k-1}^*,\ldots,b_{\ell+1}^*$ and that we now need to choose $b_\ell^*$. For each $i\in\{\ell+1,\ell+2,\ldots,k\}$, let $H_i^*$ be the hook with southwest endpoint $(d_i,\pi_{d_i})$ and northeast endpoint $(b_i^*,\pi_{b_i^*})$. Let $Z_\ell^*$ be the set of entries $\pi_y$ to the right of $\pi_{d_\ell}$ in $\pi$ that are not in the set $\{\pi_{b_{\ell+1}^*},\pi_{b_{\ell+2}^*},\ldots,\pi_{b_k^*}\}$ such that $(y,\pi_y)$ does not lie below any of the hooks $H_{\ell+1}^*,H_{\ell+2}^*,\ldots,H_k^*$. Of the entries in $Z_\ell^*$, let $\pi_{b_\ell^*}$ be the smallest one that is greater than $\pi_{d_\ell}$. In particular, $\pi_{b_k^*}$ is the smallest entry that is greater than $\pi_{d_k}$ and that appears to the right of $\pi_{d_k}$ in $\pi$. We refer to the entries $b_1^*,b_2^*,\ldots,b_k^*$ in $\bm{b}^*$ as \emph{canonical northeast endpoints}. Figure \ref{Fig6} shows the canonical valid hook configuration of the permutation $\pi$ from Example \ref{Exam2}. 

\begin{remark}\label{Rem8}
It is possible to massage the ideas in \cite{Defant} in order to obtain a one-to-one correspondence between decreasing binary plane trees with postorder $\pi$ and pairs $(\mathscr H,\mathscr T)$, where $\mathscr H\in\mathcal H_{\{0,2\}}(\pi)$ and $\mathscr T$ is a certain tuple of decreasing binary plane trees. If $\mathscr H=\mathscr H^*$ is the canonical valid hook configuration of $\pi$ and the trees in the tuple $\mathscr T$ are chosen so that every edge in every tree is a right edge, then the corresponding tree with postorder $\pi$ will be the canonical tree that Bousquet-M\'elou introduced.  
\end{remark}

\begin{figure}[t]
\begin{center} 
\includegraphics[height=6.0cm]{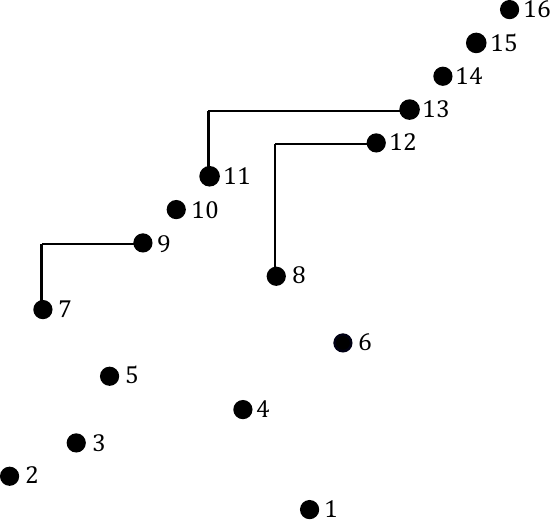}
\end{center}
\captionof{figure}{The permutation $\pi=2.7.3.5.9.10.11.4.8.1.6.12.13.14.15.
16$ from Example \ref{Exam2} has a canonical valid hook configuration with corresponding $k$-tuple $\bm{b}^*=(9,13,12)$. } \label{Fig6}
\end{figure}

With $\varphi$ as in \eqref{Eq12}, let $\varphi(\mathscr H^*)=(\mu_0,\mu_1,\ldots,\mu_k)$. Suppose each entry $\pi_{b_i^*}$ is in the $e_i^\text{th}$ ascending run of $\pi$. In other words, we define $e_i$ for $i\in\{1,\ldots,k\}$ by requiring that $d_{e_i-1}<b_i^*\leq d_{e_i}$ (again, $d_j$ is the $j^{\text{th}}$ descent of $\pi$ while we use the conventions $d_0=0$ and $d_{k+1}=n$). We also put $e_0=k+1$. Let $\alpha_j=\vert\{i\in\{1,2,\ldots,k\}\colon e_i=j\}\vert$ denote the number of canonical northeast endpoints $(b_i^*,\pi_{b_i^*})$ such that $\pi_{b_i^*}$ is in the $j^{\text{th}}$ ascending run of $\pi$. For example, $\alpha_1=0$ because if $(b_i^*,\pi_{b_i}^*)$ is any canonical northeast endpoint of $\pi$, then we must have $b_i^*>d_1$. 

It is possible to show that a composition $(q_0,q_1,\ldots,q_k)$ of $n-k$ into $k+1$ parts is a valid composition of $\pi$ if and only if the following two conditions hold:
\begin{enumerate}[(a)]
\item For any $m\in\{0,1,\ldots,k\}$, \[\sum_{j=m}^{e_m-1}q_j\geq\sum_{j=m}^{e_m-1}\mu_j.\]
\item If $m,p\in\{0,1,\ldots,k\}$ are such that $m\leq p\leq e_m-2$, then \[\sum_{j=m}^pq_j\geq d_{p+1}-d_m-\sum_{j=m+1}^{p+1}\alpha_j.\]
\end{enumerate}
However, the proof of this fact is quite technical. Because we will not end up needing this result in any subsequent arguments, we omit its proof. 

We now have all the ingredients necessary for our main result. Let $F(\pi)$ denote the number of preimages of $\pi$ under $s$, and let $F(\pi,m)$ denote the number of such preimages with exactly $m$ descents. Moreover, recall that $C_i$ and $N(i,j)$ denote Catalan and Narayana numbers, respectively. 
\begin{theorem}\label{Thm3}
Let $\pi\in S_n$ be a permutation with exactly $k$ descents. Let $V(\pi)$ be as in Definition \ref{Def1}. 
We have \[F(\pi)=\sum_{(q_0,q_1,\ldots,q_k)\in V(\pi)}\prod_{i=0}^kC_{q_i}.\] For any nonnegative integer $m$, \[F(\pi,m)=\sum_{\substack{(q_0,q_1,\ldots,q_k)\in V(\pi)\\j_0+j_1+\cdots+j_k=m-k}}\prod_{t=0}^kN(q_t,j_t+1).\] Furthermore, the number of preimages of $\pi$ under $s$ with exactly $m$ valleys is given by 
\[2^{n-2m+1}\sum_{\substack{(q_0,q_1,\ldots,q_k)\in V(\pi)\\j_0+j_1+\cdots+j_k=m}}\prod_{t=0}^k {q_t-1\choose 2j_t-2}C_{j_t-1}.\] In these sums, $j_0,j_1,\ldots,j_k$ are nonnegative integers.  
\end{theorem}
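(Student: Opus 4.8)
The plan is to obtain all three formulas directly from Lemmas \ref{Lem15}, \ref{Lem16}, and \ref{Lem17} by a change of index in the sums appearing there. Each of those lemmas expresses the relevant quantity ($F(\pi)$, $F(\pi,m)$, or the number of preimages with exactly $m$ valleys) as a sum over $\mathscr H\in\mathcal H_{\{0,2\}}(\pi)$ in which the summand associated to $\mathscr H$ depends on $\mathscr H$ only through the tuple $(\widehat q_0(\mathscr H),\widehat q_1(\mathscr H),\ldots,\widehat q_k(\mathscr H))$; by Remark \ref{Rem7} this tuple has exactly $k+1$ entries, so the index $t$ in the products runs over $\{0,1,\ldots,k\}$, matching the statement of the theorem.

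The key point is that this tuple is precisely $\varphi(\mathscr H)$, where $\varphi$ is the map of Lemma \ref{Lem1}, and Lemma \ref{Lem1} tells us $\varphi$ is injective. Hence $\varphi$ restricts to a bijection from $\mathcal H_{\{0,2\}}(\pi)$ onto its image $V(\pi)$. Under this bijection the summand attached to $\mathscr H$ is carried to the summand obtained by replacing each $\widehat q_t(\mathscr H)$ with $q_t$, where $(q_0,q_1,\ldots,q_k)=\varphi(\mathscr H)$. Performing this reindexing in each of the three sums from Lemmas \ref{Lem15}, \ref{Lem16}, and \ref{Lem17} yields exactly the three displayed formulas; in the formulas for $F(\pi,m)$ and for the valley count, the inner sum over nonnegative integers $j_0,\ldots,j_k$ with prescribed sum is simply carried along unchanged.

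There is essentially no obstacle remaining once Lemma \ref{Lem1} is available: the entire content of the theorem is the observation that the valid-hook-configuration sums of \cite{Defant} can be rewritten as sums over valid compositions, and injectivity of $\varphi$ is exactly what makes this legitimate. The only point that merits a remark is that $V(\pi)\subseteq\text{Comp}_{k+1}(n-k)$, i.e., that every tuple $\varphi(\mathscr H)$ is a composition of $n-k$ into $k+1$ parts; this follows from Definition \ref{Def7} (which gives $\sum_t q_t(\mathscr H)=n$) together with Remark \ref{Rem6} (each of the $k$ coordinates discarded in passing from the $q_t(\mathscr H)$ to the $\widehat q_t(\mathscr H)$ equals $1$), so that $\sum_{t=0}^k\widehat q_t(\mathscr H)=n-k$.
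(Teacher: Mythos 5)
Your proposal is correct and is essentially the paper's own argument: Lemma \ref{Lem1} makes $\varphi$ a bijection from $\mathcal H_{\{0,2\}}(\pi)$ onto $V(\pi)$, and the three formulas follow by reindexing the sums in Lemmas \ref{Lem15}, \ref{Lem16}, and \ref{Lem17}. Your closing remark that $\sum_{t=0}^k\widehat q_t(\mathscr H)=n-k$ via Remarks \ref{Rem6} and \ref{Rem7} is the same observation the paper uses (just before Lemma \ref{Lem1}) to define $\varphi$ as a map into $\text{Comp}_{k+1}(n-k)$.
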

\begin{proof} 
Lemma \ref{Lem1} and Definition \ref{Def1} tell us that the function $\varphi$ defined in \eqref{Eq12} provides a bijection between $\mathcal H_{\{0,2\}}(\pi)$ and $V(\pi)$. The result is then immediate from Lemmas \ref{Lem15}, \ref{Lem16}, and \ref{Lem17}. 
\end{proof}

\begin{corollary}\label{Cor1}
If $\pi\in S_n$ is a permutation with exactly $k$ descents and $m$ is a nonnegative integer, then \[F(\pi)\leq\sum_{i_0+i_1+\cdots+i_k=n-k}\prod_{t=0}^kC_{i_t}=\frac{2k+2}{n+1}{2n-2k-1\choose n}\] and \[F(\pi,m)\leq\sum_{\substack{i_0+i_1+\cdots+i_k=n-k\\j_0+j_1+\cdots+j_k=m-k}}\prod_{t=0}^kN(i_t,j_t+1),\] where the sums range over compositions $(i_0,i_1,\ldots,i_k)$ of $n-k$ and the second sum also ranges over weak compositions $(j_0,j_1,\ldots,j_k)$ of $m-k$. 
\end{corollary}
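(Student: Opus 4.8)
The plan is to deduce both inequalities directly from Theorem~\ref{Thm3} by enlarging the index set of each sum. For the bound on $F(\pi)$, I start from the identity $F(\pi)=\sum_{(q_0,\ldots,q_k)\in V(\pi)}\prod_{i=0}^kC_{q_i}$. Since $V(\pi)\subseteq\mathrm{Comp}_{k+1}(n-k)$ and every summand $\prod_{i=0}^kC_{q_i}$ is nonnegative, replacing the sum over $V(\pi)$ by the sum over all of $\mathrm{Comp}_{k+1}(n-k)$ only increases the value; this yields $F(\pi)\leq\sum_{i_0+\cdots+i_k=n-k}\prod_{t=0}^kC_{i_t}$. The same monotonicity argument, applied to the second formula in Theorem~\ref{Thm3} and using that Narayana numbers are nonnegative, gives $F(\pi,m)\leq\sum_{i_0+\cdots+i_k=n-k,\ j_0+\cdots+j_k=m-k}\prod_{t=0}^kN(i_t,j_t+1)$, where the $i_t$ range over compositions of $n-k$ and the $j_t$ over weak compositions of $m-k$.

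The one genuinely substantive point is the closed form $\sum_{i_0+\cdots+i_k=n-k}\prod_{t=0}^kC_{i_t}=\frac{2k+2}{n+1}\binom{2n-2k-1}{n}$. I would prove this via generating functions: let $C(x)=\sum_{i\geq0}C_ix^i=\frac{1-\sqrt{1-4x}}{2x}$ be the Catalan generating function, so that the left-hand side is the coefficient of $x^{n-k}$ in $C(x)^{k+1}$. There is a classical formula (due to the ballot/cycle lemma, or obtainable by Lagrange inversion) stating that $[x^r]C(x)^{s}=\dfrac{s}{2r+s}\binom{2r+s}{r}$ for $s\geq1$. Setting $r=n-k$ and $s=k+1$ gives $\dfrac{k+1}{2(n-k)+k+1}\binom{2(n-k)+k+1}{n-k}=\dfrac{k+1}{2n-k+1}\binom{2n-k+1}{n-k}$. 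I would then check that this equals $\frac{2k+2}{n+1}\binom{2n-2k-1}{n}$ by a direct manipulation of binomial coefficients: rewrite $\binom{2n-k+1}{n-k}=\binom{2n-k+1}{n+1}$, factor $\frac{2k+2}{n+1}=\frac{2(k+1)}{n+1}$, and reconcile the two expressions, noting that this forces the relation $2n-k+1 = (2n-2k-1)+(k+2)$ so the identity is really a statement about the two ways of writing the same product of falling factorials. (One should double-check the edge behavior: the formula $\binom{2n-2k-1}{n}$ presupposes $2n-2k-1\geq n$, i.e. $n\geq 2k+1$, which is exactly the regime in which $\mathcal H_{\{0,2\}}(\pi)$ can be nonempty, so the corollary is vacuously or trivially true otherwise.)

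I expect the main obstacle to be purely bookkeeping: getting the Narayana-number version of the $C(x)^{s}$ identity to land in the stated form, and making sure the binomial-coefficient algebra in the last step is carried out with the correct conventions (in particular being careful with negative upper arguments and with the case distinctions on $n$ versus $2k+1$). None of this is conceptually hard once Theorem~\ref{Thm3} is in hand; the corollary is essentially an observation plus a standard lattice-path enumeration. I would present the monotonicity reduction in one line, cite or quickly derive the $[x^r]C(x)^s$ formula, and then do the binomial simplification. An alternative, fully combinatorial route to the closed form is to interpret a composition $(i_0,\ldots,i_k)$ of $n-k$ together with a choice of a Dyck-type object of size $i_t$ in each part as a single lattice path and count those directly via the cycle lemma; I would mention this as a remark but carry out the generating-function computation as the primary argument since it is shortest.
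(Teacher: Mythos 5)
Your reduction of both inequalities to Theorem~\ref{Thm3} by enlarging $V(\pi)$ to all of $\mathrm{Comp}_{k+1}(n-k)$ (using nonnegativity of the summands) is exactly the paper's first step and is fine. The gap is in your evaluation of the closed form. The sum $\sum_{i_0+\cdots+i_k=n-k}\prod_{t=0}^k C_{i_t}$ ranges over \emph{compositions}, i.e.\ all parts $i_t\geq 1$ (this is forced, since $V(\pi)\subseteq\mathrm{Comp}_{k+1}(n-k)$ consists of compositions with positive parts; only the $j_t$'s form a weak composition). It is therefore \emph{not} the coefficient of $x^{n-k}$ in $C(x)^{k+1}$, which would count weak compositions with parts $i_t\geq 0$. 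Your formula $\frac{k+1}{2n-k+1}\binom{2n-k+1}{n-k}$ computes the weak-composition sum and is strictly larger than the stated quantity whenever $k\geq 1$: for $n=3$, $k=1$ the composition sum is $C_1C_1=1=\frac{2k+2}{n+1}\binom{2n-2k-1}{n}=\frac{4}{4}\binom{3}{3}$, while your expression gives $\frac{2}{6}\binom{6}{2}=5$ (which indeed equals $C_0C_2+C_1C_1+C_2C_0$). So the ``direct manipulation of binomial coefficients'' you defer to cannot succeed; the two expressions are genuinely different.

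The fix, which is what the paper does, is to encode the positivity of the parts: each part contributes $\sum_{i\geq 1}C_i x^i=C(x)-1=xC(x)^2$, so the composition sum is $[x^{n-k}]\bigl(xC(x)^2\bigr)^{k+1}=[x^{n-2k-1}]C(x)^{2k+2}$. Applying the same coefficient formula you quote, $[x^m]C(x)^r=\frac{r}{m+r}\binom{2m+r-1}{m}$ with $m=n-2k-1$ and $r=2k+2$, gives $\frac{2k+2}{n+1}\binom{2n-2k-1}{n-2k-1}=\frac{2k+2}{n+1}\binom{2n-2k-1}{n}$ directly, with no further reconciliation needed (and the case $n<2k+1$ comes out as $0=0$ automatically). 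Your treatment of the second inequality is unaffected, since no closed form is asserted there, but note that if you carried the weak-composition convention over to the Narayana sum you would again be proving a different (weaker) bound than the one stated.
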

\begin{proof}
The inequalities follow immediately from Theorem \ref{Thm3}. Let $C(x)=\dfrac{1-\sqrt{1-4x}}{2x}$. It is straightforward to show that \[\sum_{n\geq 0}\left[\sum_{i_0+i_1+\cdots+i_k=n-k}\prod_{t=0}^kC_{i_t}\right]x^n=x^{2k+1}C(x)^{2k+2}.\] Catalan proved \cite{Catalan87} that the coefficient of $x^m$ in $C(x)^r$ is $[x^m]C(x)^r=\displaystyle{\frac{r}{m+r}{2m+r-1\choose m}}$, so it follows that \[\sum_{i_0+i_1+\cdots+i_k=n-k}\prod_{t=0}^kC_{i_t}=[x^{n-2k-1}]C(x)^{2k+2}=\frac{2k+2}{n+1}{2n-2k-1\choose n}.\]
\end{proof}

We end this section with a remark about a certain symmetry in the numbers $F(\pi,m)$. Fix a valid composition $(q_0,q_1,\ldots,q_k)$. If we let $j_t'=q_t-j_t-1$, then we find that \[\sum_{\substack{j_0+j_1+\cdots+j_t\\ =m-k}}\prod_{t=0}^kN(q_t,j_t+1)=\sum_{\substack{j_0'+j_1'+\cdots+j_t'\\=(n-m-1)-k}}\prod_{t=0}^kN(q_t,j_t+1)=\sum_{\substack{j_0'+j_1'+\cdots+j_t'\\=(n-m-1)-k}}\prod_{t=0}^kN(q_t,j_t'+1).\] It follows from Theorem \ref{Thm3} that $F(\pi,m)=F(\pi,n-m-1)$ for any integer $m$. This is a result originally due to Mikl\'os B\'ona \cite{Bona02}. 

\section{$3$-Stack \& $4$-Stack Sortable Permutations}
As mentioned in the introduction, for any integer $t\geq 3$, the best known upper bound for the number $\displaystyle{\lim_{n\to\infty}}\sqrt[n]{W_t(n)}$ is $(t+1)^2$. We remark that it is known that this limit actually exists because a sum of $t$-stack sortable permutations is $t$-stack sortable. Our goal in this section is to use Corollary \ref{Cor1} to improve this bound for $t=3$ and $t=4$.  Before we do so, we will need to take a short detour to state a lemma regarding generalized Narayana numbers, which we define by \[N_k(n,r)=\frac{k+1}{n}{n\choose r+k}{n\choose r-1}.\] Note that $N_0(n,r)=N(n,r)$. 

The proof of the following lemma, although perhaps combinatorially interesting in its own right, does not have any immediate relation to permutations or stack sorting. For this reason, the reader may bypass it without fear of missing any details that are crucial to our development.

Consider the set of lattice paths that use the steps $(1,0)$, $(0,1)$, $(-1,0)$, and $(0,-1)$ and never pass below the $x$-axis. Let $L_p(u,v)$ be the set of all such paths that start at $(0,0)$, end at $(u,v)$, and use exactly $p$ steps. It is known \cite{Guy00} that \begin{equation}\label{Eq16}
N_k(n,r)=\vert L_{n-1}(2r-n+k-1,k)\vert.
\end{equation}

\begin{lemma}\label{Lem11}
Let \[E(n,k,\ell)=\sum_{\substack{i_0+i_1+\cdots+i_k=n\\j_0+j_1+\cdots+j_k=\ell}}\prod_{t=0}^kN(i_t,j_t+1),\] where the sum ranges over all compositions $(i_0,i_1,\ldots,i_k)$ of $n$ and all weak compositions $(j_0,j_1,\ldots,$ $j_k)$ of $\ell$. We have \[E(n,k,\ell)=N_k(n,\ell+1).\] 
\end{lemma}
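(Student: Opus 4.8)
The plan is to give a combinatorial/bijective proof using the lattice path interpretation in \eqref{Eq16}. First I would recognize that the right-hand side $N_k(n,\ell+1)$ equals $\vert L_{n-1}(2\ell-n+k+1,k)\vert$, the number of length-$(n-1)$ lattice paths (steps $(\pm 1,0)$, $(0,\pm 1)$, staying weakly above the $x$-axis) from $(0,0)$ to $(2\ell-n+k+1,k)$. Then I would show that the left-hand side $E(n,k,\ell)$ counts exactly the same set of paths by concatenation. The single Narayana factor $N(i_t,j_t+1)$ equals $\vert L_{i_t-1}(2(j_t+1)-i_t+0-1,0)\vert = \vert L_{i_t-1}(2j_t-i_t+1,0)\vert$ by \eqref{Eq16} with $k=0$; that is, it counts length-$(i_t-1)$ paths that begin and end on the $x$-axis and end at horizontal displacement $2j_t-i_t+1$. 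So a term of the sum over compositions corresponds to choosing $k+1$ such ``$x$-axis-returning'' path segments with prescribed lengths $i_t-1$ and horizontal displacements.

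The key step is to describe how these $k+1$ segments are glued together into a single path of length $n-1$ ending at height $k$. Between consecutive segments I would insert exactly one up-step $(0,1)$ — there are $k$ such insertions for $k+1$ segments — so the total number of steps is $\sum_{t=0}^k (i_t-1) + k = n - (k+1) + k = n-1$, as required, and the final height is $0 + k = k$ since each inserted up-step raises the height by one and the segments themselves return to their starting height. The total horizontal displacement is $\sum_{t=0}^k (2j_t - i_t + 1) = 2\ell - n + (k+1)$, matching the target $x$-coordinate $2\ell-n+k+1$. I would need to check that the glued path stays weakly above the $x$-axis: this is immediate because after the $t$-th inserted up-step the path runs at height $\geq t \geq 0$, and each segment, being an $L$-path translated up by a nonnegative amount, never goes below its own (nonnegative) baseline.

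The main obstacle — and the point requiring real care — is the \textbf{reverse map}: given an arbitrary path $P \in L_{n-1}(2\ell-n+k+1,k)$, I must canonically decompose it into $k+1$ $x$-axis-returning segments separated by $k$ distinguished up-steps, so that the correspondence is a genuine bijection. The natural choice is to cut $P$ at the up-steps that achieve a new running maximum height for the first time — i.e., the step that first reaches height $1$, the step that first reaches height $2$, and so on up to height $k$ (these exist because the path ends at height $k$). The segment between the step first reaching height $t$ and the step first reaching height $t+1$ lives entirely at heights $\geq t$, and if one subtracts $t$ it becomes a path from height $0$ that never dips below $0$ and returns to height $0$ at its end (since the next first-passage up-step departs from height $t$), hence an $L$-path from the origin back to the $x$-axis. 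One then reads off $i_t - 1$ as the length of that segment and $2j_t - i_t + 1$ as its horizontal displacement, recovering $(i_0,\dots,i_k)$ and $(j_0,\dots,j_k)$; I would verify these are, respectively, a composition of $n$ and a weak composition of $\ell$, and that this inverts the gluing map. I expect the bookkeeping of the displacement/length arithmetic and the verification that first-passage heights give a well-defined decomposition to be the only slightly delicate parts; everything else is routine.
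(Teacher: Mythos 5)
Your forward construction is exactly the paper's: interpret each factor $N(i_t,j_t+1)$ via \eqref{Eq16} as counting paths in $L_{i_t-1}(2j_t-i_t+1,0)$, translate the $t$-th segment up to height $t$, and join consecutive segments with $k$ inserted $(0,1)$ steps; the length, displacement, and nonnegativity bookkeeping you give is correct. The problem is your inverse map. Cutting a path $P\in L_{n-1}(2\ell-n+k+1,k)$ at the \emph{first-passage} up-steps to heights $1,2,\ldots,k$ does not recover the inserted glue steps, because the segment $\mathscr P_t'$ (based at height $t$) is free to rise above height $t$ before returning to it, so the first step of $P$ reaching height $t+1$ can occur strictly inside $\mathscr P_t'$ rather than at the glue step. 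Moreover, your claim that the portion between the first passage to height $t$ and the first passage to height $t+1$ ``lives entirely at heights $\geq t$'' is false: by definition of first passage that portion never exceeds height $t$, and it may dip all the way down to $0$. A minimal counterexample with $k=1$: take $\mathscr P_0$ to be the two steps $(0,1),(0,-1)$ (so $i_0=3$, $j_0=1$) and $\mathscr P_1$ empty ($i_1=1$, $j_1=0$); the glued path is up, down, up. Its first passage to height $1$ is the very first step, and cutting there leaves the tail ``down, up,'' which dips below its baseline height $1$ and hence, after subtracting $1$, is not an $L$-path; so your decomposition neither inverts the gluing nor even lands in the correct set of segment tuples.

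The repair is the \emph{last-passage} decomposition, which is what the paper uses: the endpoint of $\mathscr P_t'$ is the \emph{last} point of $P$ at height $t$. Since after that point $P$ never returns to height $t$, the step leaving it must be an up-step (these are the glue steps), the portion between consecutive such cut points stays weakly above height $t$ and ends at height $t$, and subtracting $t$ yields a genuine $L$-path returning to the axis. With that change the rest of your argument (recovering $(i_0,\ldots,i_k)$ and $(j_0,\ldots,j_k)$ and checking the arithmetic) goes through and coincides with the paper's proof; as written, however, the inverse step fails.
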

\begin{proof}
Choose a composition $(i_0,i_1,\ldots,i_k)$ of $n$ and a weak composition $(j_0,j_1,\ldots,j_k)$ of $\ell$. For each $t\in\{0,1,\ldots,k\}$, form a lattice path $\mathscr P_t$ in $L_{i_t-1}(2j_t-i_t+1,0)$. Equation \eqref{Eq16} tells us that the number of ways to choose each path $\mathscr P_t$ is $N(i_t,j_t+1)$, so the number of ways to make all of these choices is $E(n,k,\ell)$. 

Set $\mathscr P_0'=\mathscr P_0$. For each $t\in\{1,2,\ldots,k\}$, translate the path $\mathscr P_t$ to a new path $\mathscr P_t'$ so that the starting point of $\mathscr P_t'$ is one unit above the endpoint of $\mathscr P_{t-1}'$. Attach the endpoint of $\mathscr P_{t-1}'$ to the starting point of $\mathscr P_t'$ with a $(0,1)$ step. This results in a lattice path $P$ in $L_{n-1}(2\ell-n+k+1,k)$. The endpoint of $\mathscr P_t'$ is the last point in $P$ whose $y$-coordinate is $t$. Therefore, if we are given any path in $L_{n-1}(2\ell-n+k+1,k)$, we can determine exactly which composition $(i_0,i_1,\ldots,i_k)$, weak composition $(j_0,j_1,\ldots,j_k)$, and paths $\mathscr P_0,\mathscr P_1,\ldots,\mathscr P_k$ were used to obtain it in this fashion. This shows that \[E(n,k,\ell)=\vert L_{n-1}(2\ell-n+k+1,k)\vert=N_k(n,\ell+1).\]
\end{proof}

We can now use Lemma \ref{Lem11} in conjunction with Corollary \ref{Cor1} to see that if $\pi$ is a permutation of length $n$ with $k$ descents and $m$ is a nonnegative integer, then 
\begin{equation}\label{Eq17}
F(\pi,m)\leq N_k(n-k,m-k+1)=\frac{k+1}{n-k}{n-k\choose m+1}{n-k\choose m-k}.
\end{equation} We arrive at the following recursive upper bounds for the numbers $W_t(n)$ and $W_t(n,m)$. 
\begin{theorem}\label{Thm5}
Let $n\geq 2$ and $t\geq 1$ be integers, and let $m$ be a nonnegative integer. We have \[W_{t+1}(n)\leq\sum_{k=0}^{\left\lfloor (n-1)/2\right\rfloor}\frac{2k+2}{n+1}{2n-2k-1\choose n}W_t(n-1,k)\] and \[W_{t+1}(n,m)\leq\sum_{k=0}^m\frac{k+1}{n-k}{n-k\choose m+1}{n-k\choose m-k}W_t(n-1,k).\]
\end{theorem}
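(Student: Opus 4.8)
The plan is to write $W_{t+1}(n)$ and $W_{t+1}(n,m)$ as sums of fertilities over the $t$-stack sortable permutations of length $n$, cut those sums down to the permutations ending in their largest entry, strip off that entry to reindex by $t$-stack sortable permutations of length $n-1$, and then bound each individual fertility using Corollary~\ref{Cor1} and \eqref{Eq17}.

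First I would record the observation from the introduction: a permutation is $(t+1)$-stack sortable exactly when its image under $s$ is $t$-stack sortable, so the $(t+1)$-stack sortable permutations of length $n$ are precisely the preimages under $s$ of the $t$-stack sortable permutations of length $n$. Hence
\[
W_{t+1}(n)=\sum_{\pi}F(\pi)
\qquad\text{and}\qquad
W_{t+1}(n,m)=\sum_{\pi}F(\pi,m),
\]
where $\pi$ ranges over the $t$-stack sortable permutations in $S_n$. By Fact~\ref{Fact1}, every element of the image of $s$ ends with its largest entry, so $F(\pi)$ and $F(\pi,m)$ vanish unless $\pi=\rho n$ for some $\rho\in S_{n-1}$. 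A quick induction using Fact~\ref{Fact1} shows $s^j(\rho n)=s^j(\rho)\,n$ for every $j\ge 0$, whence $\rho n$ is $t$-stack sortable if and only if $\rho$ is; and since appending $n$ at the end introduces no new descent, $\rho n$ has the same number of descents as $\rho$. Therefore both sums above may be rewritten as sums over the $t$-stack sortable $\rho\in S_{n-1}$, with $F(\pi)$ replaced by $F(\rho n)$ and $F(\pi,m)$ by $F(\rho n,m)$.

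To conclude, I would group these $\rho$ by their number of descents $k$: there are exactly $W_t(n-1,k)$ of them with $k$ descents, and each corresponding $\rho n\in S_n$ also has $k$ descents. Corollary~\ref{Cor1} then gives $F(\rho n)\le\frac{2k+2}{n+1}\binom{2n-2k-1}{n}$, and \eqref{Eq17} gives $F(\rho n,m)\le\frac{k+1}{n-k}\binom{n-k}{m+1}\binom{n-k}{m-k}$; summing over $k$ produces the two claimed inequalities. The stated upper limits of summation are simply the places where the binomial factors become zero: $\binom{2n-2k-1}{n}=0$ once $k>\lfloor(n-1)/2\rfloor$, and $\binom{n-k}{m-k}=0$ once $k>m$, so the omitted terms contribute nothing.

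There is no genuinely hard step: all of the analytic content sits inside Corollary~\ref{Cor1} and \eqref{Eq17}, which we are free to assume. The only points needing a moment of care are the two structural facts about appending a maximum entry — preservation of $t$-stack sortability and of the descent count — and the matching of each truncation with the vanishing of a binomial coefficient. As a sanity check I would verify the case $n=2$, where the first inequality reads $W_{t+1}(2)\le 2\,W_t(1,0)=2$, which holds with equality.
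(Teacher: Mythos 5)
Your proposal is correct and follows essentially the same route as the paper: both reduce to the observation that only permutations ending in their largest entry have nonzero fertility (so the sums reindex over $t$-stack sortable permutations of length $n-1$ grouped by descent number, contributing $W_t(n-1,k)$ terms), and then bound each fertility via Corollary~\ref{Cor1} and \eqref{Eq17}, with the truncations of the sums justified by vanishing binomial coefficients. The only cosmetic difference is that the paper packages the reindexing through the sets $Y_t(n,k)$ rather than spelling out the induction $s^j(\rho n)=s^j(\rho)\,n$, which you verify directly from Fact~\ref{Fact1}.
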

\begin{proof}
Let $Y_t(n,k)$ denote the set of $t$-stack sortable permutations in $S_n$ that have exactly $k$ descents and that have last entry $n$. A permutation $\sigma_1\sigma_2\cdots\sigma_{n-1}n$ is in $Y_t(n,k)$ if and only if $\sigma_1\sigma_2\cdots\sigma_{n-1}$ is a $t$-stack sortable permutation in $S_{n-1}$ with exactly $k$ descents. Therefore, $\vert Y_t(n,k)\vert=W_t(n-1,k)$. Recall that a permutation is said to be sorted if it is in the image of $s$. Observe that the last entry of any sorted permutation in $S_n$ must be $n$. Consequently, 
\begin{equation}\label{Eq22}
W_{t+1}(n)=\sum_{k=0}^{n-1}\sum_{\sigma\in Y_t(n,k)}F(\sigma)
\end{equation}
and 
\begin{equation}\label{Eq23}
W_{t+1}(n,m)=\sum_{k=0}^{n-1}\sum_{\sigma\in Y_t(n,k)}F(\sigma,m).
\end{equation}
It follows from \eqref{Eq22} and Corollary \ref{Cor1} that 
\[W_{t+1}(n)\leq\sum_{k=0}^{n-1}\frac{2k+2}{n+1}{2n-2k-1\choose n}\vert Y_t(n,k)\vert=\sum_{k=0}^{\left\lfloor (n-1)/2\right\rfloor}\frac{2k+2}{n+1}{2n-2k-1\choose n}W_t(n-1,k),\] where $k$ ranges from $0$ to $\left\lfloor (n-1)/2\right\rfloor$ in the second sum because $\displaystyle{{2n-2k-1\choose n}}=0$ when $k\geq n/2$. It follows from \eqref{Eq17} and \eqref{Eq23} that \[W_{t+1}(n,m)\leq\sum_{k=0}^{n-1}\frac{k+1}{n-k}{n-k\choose m+1}{n-k\choose m-k}\vert Y_t(n,k)\vert=\sum_{k=0}^m\frac{k+1}{n-k}{n-k\choose m+1}{n-k\choose m-k}W_t(n-1,k),\] where $k$ ranges from $0$ to $m$ in the second sum because $\displaystyle{n-k\choose m-k}=0$ when $k>m$. 
\end{proof}
We are now able to prove our upper bound for $\displaystyle{\lim_{n\to\infty}}\sqrt[n]{W_3(n)}$. We will make use of the formula 
\begin{equation}\label{Eq18}
W_2(n,k)=\frac{1}{(k+1)(2k+1)}{2n-k-1\choose k}{n+k\choose n-k},
\end{equation}
which appears as Corollary 9 in \cite{Dulucq}. 

\begin{theorem}\label{Thm6}
Let $\omega\approx 0.28839$ be the unique real root of the polynomial $4x^3-3x^2+4x-1$. We have \[\lim_{n\to\infty}\sqrt[n]{W_3(n)}\leq\frac{(2-\omega)^{2-\omega}(1+\omega)^{1+\omega}}{\omega^\omega(1-\omega)^{1-\omega}(2\omega)^{2\omega}(1-2\omega)^{1-2\omega}}\approx 12.53296.\]
\end{theorem}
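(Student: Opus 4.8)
The plan is to use the recursive bound from Theorem \ref{Thm5} with $t=2$,
\[
W_3(n)\leq\sum_{k=0}^{\lfloor(n-1)/2\rfloor}\frac{2k+2}{n+1}\binom{2n-2k-1}{n}W_2(n-1,k),
\]
together with the explicit formula \eqref{Eq18} for $W_2(n-1,k)$, and to extract the exponential growth rate by a Laplace-type (largest-term) analysis. Since a sum of at most $n$ terms has the same exponential growth rate as its maximal term, it suffices to determine
\[
\lim_{n\to\infty}\max_{0\le k\le n/2}\left(\frac{2k+2}{n+1}\binom{2n-2k-1}{n}\,\frac{1}{(k+1)(2k+1)}\binom{2n-k-3}{k}\binom{n+k-1}{n-k-1}\right)^{1/n}.
\]
First I would write $k=\omega n$ for a real parameter $\omega\in[0,1/2]$ and apply Stirling's formula to each binomial coefficient, discarding the polynomial prefactors $\frac{2k+2}{n+1}$ and $\frac{1}{(k+1)(2k+1)}$ since they contribute nothing to the $n$-th root in the limit. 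This turns the quantity inside the $\max$ into $e^{n\,g(\omega)+o(n)}$ for an explicit function $g(\omega)$ built out of terms of the form $a\log a$ (the entropy-type pieces coming from $\binom{2n-2k-1}{n}=\binom{2n-2k}{n}(1+o(1))$, $\binom{2n-k-3}{k}$, and $\binom{n+k-1}{n-k-1}=\binom{n+k}{2k}(1+o(1))$ after absorbing bounded shifts in the arguments).

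Next I would maximize $g$ over $\omega\in[0,1/2]$. Setting $g'(\omega)=0$ gives an equation among logarithms; exponentiating clears the logarithms and yields a polynomial (or rational) equation in $\omega$, which should reduce to $4\omega^3-3\omega^2+4\omega-1=0$. One checks this cubic has a unique real root $\omega\approx0.28839$, that it lies in $(0,1/2)$, and that $g$ is concave there (or at least that this critical point beats the boundary values $g(0)$ and $g(1/2)$), so the maximum of $g$ is attained at this $\omega$. Substituting back, $e^{g(\omega)}$ collects into the stated closed form
\[
\frac{(2-\omega)^{2-\omega}(1+\omega)^{1+\omega}}{\omega^\omega(1-\omega)^{1-\omega}(2\omega)^{2\omega}(1-2\omega)^{1-2\omega}},
\]
and numerically this is $\approx12.53296$. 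The final inequality then follows because $W_3(n)^{1/n}$ is bounded above by $(n\cdot\max_k(\cdots))^{1/n}=e^{g(\omega)+o(1)}$.

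The main obstacle is bookkeeping precision in the Stirling analysis: one must be careful that the shifts by constants in the binomial arguments ($2n-2k-1$ versus $2n-2k$, $2n-k-3$, $n+k-1$, etc.) and the polynomial prefactors genuinely wash out under the $n$-th root — they do, since each is $n^{O(1)}$ and $\binom{N}{M}$ with $N,M$ shifted by $O(1)$ changes by a factor $N^{O(1)}$ — and that the resulting $g(\omega)$ is transcribed correctly, since a sign or coefficient error propagates into the wrong cubic. A secondary point requiring care is confirming that the formula \eqref{Eq18} for $W_2(n-1,k)$ is being evaluated at the right arguments (length $n-1$, $k$ descents) so that the binomials come out as $\binom{2(n-1)-k-1}{k}=\binom{2n-k-3}{k}$ and $\binom{(n-1)+k}{(n-1)-k}=\binom{n+k-1}{n-k-1}$, and that the constraint $0\le k\le\lfloor(n-1)/2\rfloor$ is compatible with the critical $\omega<1/2$ (it is). Everything after identifying $g$ and its maximizer is a routine—if slightly tedious—algebraic simplification.
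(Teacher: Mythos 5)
Your proposal is correct and follows essentially the same route as the paper: bound $W_3(n)$ via Theorem \ref{Thm5} with $t=2$ and formula \eqref{Eq18}, replace the sum by a polynomial multiple of its largest term, apply factorial asymptotics (the paper uses the elementary bounds $r^re^{-r}\leq r!\leq(r+1)^{r+1}e^{-r}$ rather than Stirling, but to the same effect), and maximize the resulting exponential-rate function over $k/n\in[0,1/2)$, whose critical-point equation indeed reduces to $4x^3-3x^2+4x-1=0$. The only differences are cosmetic matters of bookkeeping, so no further comment is needed.
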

\begin{proof}
Fix an integer $n\geq 4$, and let $K$ be an integer in the set $\{0,1,\ldots,\left\lfloor(n-1)/2\right\rfloor\}$ that maximizes \[\frac{2}{(n+1)(2K+1)}{2n-2K-1\choose n}{2n-K-3\choose K}{n+K-1\choose n-K-1}.\] Let $y=K/n$. Combining \eqref{Eq18} with the first inequality in Theorem \ref{Thm5} (with $t=2$) yields \[W_3(n)\leq\sum_{k=0}^{\left\lfloor(n-1)/2\right\rfloor}\frac{2}{(n+1)(2k+1)}{2n-2k-1\choose n}{2n-k-3\choose k}{n+k-1\choose n-k-1}\] 
\begin{equation}\label{Eq19}
\leq \frac{1}{(2K+1)}{2n-2K-1\choose n}{2n-K-3\choose K}{n+K-1\choose n-K-1}<n^2\frac{(2n-K-1)!(n+K-1)!}{n!K!(n-K)!(2K)!(n-2K)!}.
\end{equation} With the convention $0^0=1$, the elementary bounds
\begin{equation}\label{Eq20}
r^re^{-r}\leq r!\leq (r+1)^{r+1}e^{-r},
\end{equation} hold for all nonnegative integers $r$. Utilizing these bounds, we can deduce from \eqref{Eq19} that \[W_3(n)\leq n^2\frac{e^2(2n-K)^{2n-K}(n+K)^{n+K}}{n^nK^K(n-K)^{n-K}(2K)^{2K}(n-2K)^{n-2K}}=e^2n^2\left[\frac{(2-y)^{2-y}(1+y)^{1+y}}{y^y(1-y)^{1-y}(2y)^{2y}(1-2y)^{1-2y}}\right]^n.\] The function $f\colon[0,1/2)\to\mathbb R$ given by \[f(x)=\frac{(2-x)^{2-x}(1+x)^{1+x}}{x^x(1-x)^{1-x}(2x)^{2x}(1-2x)^{1-2x}}\] (again, we use the convention $0^0=1$) attains its maximum at $x=\omega$, so \[\sqrt[n]{W_3(n)}\leq\sqrt[n]{e^{2}n^2}f(\omega).\] 
\end{proof}

The derivation of our upper bound for $\displaystyle{\lim_{n\to\infty}}\sqrt[n]{W_4(n)}$ is a bit more involved than that of \\ $\displaystyle{\lim_{n\to\infty}}\sqrt[n]{W_3(n)}$, so we will require the following lemmas. The first provides an upper bound on the numbers $W_3(n,m)$, which we will need in order to use the first inequality in Theorem \ref{Thm5} with $t=3$.
 
\begin{lemma}\label{Lem12}
For $z\in\mathbb R$, let \[p_1(z)=-7+30z-24z^2-14z^3+12z^4-6z^5+2z^6\] and \[p_2(z)=81-324z+1188z^2-1404z^3-216z^4+1404z^5-972z^6
+432z^7-108z^8.\] Let \[\mathcal Q(z)=\frac13(z^2-z+1)-\frac{\sqrt[3]{2}(3-(z^2-z+1)^2)}{3\sqrt[3]{p_1(z)+\sqrt{p_2(z)}}}+\frac{\sqrt[3]{p_1(z)+\sqrt{p_2(z)}}}{3\sqrt[3]{2}}.\] For $0\leq u\leq v\leq 1/2$, let \[\xi(u,v)=\frac{(2-u)^{2-u}(1+u)^{1+u}}{4u^{3u}v^v(1-v-u)^{1-v-u}(v-u)^{v-u}(1-v)^{1-v}(1-u)^{1-u}}.\] Let $m\geq 0$ and $n\geq 1$ be integers with $m\leq n/2$, and put $y=m/n$. We have \[W_3(n,m)<e^4n^3\mathcal \xi(\mathcal Q(y),y)^n.\]
\end{lemma}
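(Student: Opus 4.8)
The plan is to imitate the proof of Theorem~\ref{Thm6}, now invoking the \emph{second} inequality of Theorem~\ref{Thm5} with $t=2$ together with the closed form \eqref{Eq18} for $W_2(n-1,k)$. Substituting \eqref{Eq18} into that inequality and cancelling the factor $k+1$ gives
\[W_3(n,m)\le\sum_{k=0}^m\frac{1}{(n-k)(2k+1)}\binom{n-k}{m+1}\binom{n-k}{m-k}\binom{2n-k-3}{k}\binom{n+k-1}{n-k-1}.\]
Expanding the binomials into factorials, the product $\binom{n-k}{m+1}\binom{n-k}{m-k}\binom{n+k-1}{n-k-1}$ contributes $(n-k)!^2/(n-k-1)!=(n-k)\,(n-k)!$, so the factor $1/(n-k)$ cancels; bounding $1/(2k+1)\le 1$ then leaves a ratio of factorials whose arguments are, up to bounded offsets, integer multiples of $n$. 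I would then bound the whole sum by $m+1\le n$ times its largest term, say the term indexed by $K$, and set $x=K/n$ and $y=m/n$, so that $0\le x\le y\le 1/2$ by the hypothesis $m\le n/2$. (For $n=1$ the bound is immediate, since $\xi(\mathcal Q(0),0)=1$.)

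To that single term I would apply the elementary estimates \eqref{Eq20} exactly as in the proof of Theorem~\ref{Thm6}: $r!\le(r+1)^{r+1}e^{-r}$ on the numerator factorials and $r!\ge r^re^{-r}$ on the denominator factorials. The powers of $n$ then cancel between numerator and denominator, the $e$-powers collapse to a constant, and the bounded offsets and leftover linear factors contribute only a polynomial in $n$. A routine algebraic simplification identifies the quantity that remains raised to the $n$th power: it equals exactly
\[\frac{(2-x)^{2-x}(1+x)^{1+x}}{4\,x^{3x}\,y^y(1-y-x)^{1-y-x}(y-x)^{y-x}(1-y)^{1-y}(1-x)^{1-x}}=\xi(x,y),\]
so that, collecting constants and the factor $m+1\le n$, one gets $W_3(n,m)<e^4n^3\,\xi(x,y)^n$.

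The remaining task is to prove $\xi(x,y)\le\xi(\mathcal Q(y),y)$. Since
\[\frac{\partial}{\partial x}\log\xi(x,y)=\log\frac{(1+x)(1-x-y)(y-x)(1-x)}{(2-x)\,x^3}\]
tends to $+\infty$ as $x\to 0^+$ and to $-\infty$ as $x\to y^-$, the function $\xi(\cdot,y)$ attains its maximum on $[0,y]$ at an interior critical point, and such a point satisfies $(1+x)(1-x-y)(y-x)(1-x)=(2-x)x^3$. The degree-$4$ terms cancel, leaving the cubic $x^3-(y^2-y+1)x^2+x+(y^2-y)=0$. Writing this in depressed form $t^3+pt+q=0$ via $x=t+\tfrac13(y^2-y+1)$ gives $p=\tfrac13\bigl(3-(y^2-y+1)^2\bigr)$, which is positive for $y\in[0,1/2]$; hence $-4p^3-27q^2<0$ and the cubic has a unique real root, which — since a critical point already lies in $(0,y)$ — is exactly that critical point and hence the maximizer of $\xi(\cdot,y)$ on $[0,y]$. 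One checks that $p_1(y)=-27q$ and $p_2(y)=729q^2+108p^3$ (equivalently $p_2=p_1^2+4\bigl(3-(y^2-y+1)^2\bigr)^3$), so that the displayed expression for $\mathcal Q(y)$ is precisely Cardano's formula $\tfrac13(y^2-y+1)+u+v$ for this root, with $u=\sqrt[3]{-q/2+\sqrt{q^2/4+p^3/27}}$ and $v=-p/(3u)$. Therefore $\xi(x,y)\le\xi(\mathcal Q(y),y)$, which finishes the argument.

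I expect the difficulty to lie in the bookkeeping rather than in any new idea: checking that the $n$th-power base produced by \eqref{Eq20} simplifies to $\xi(x,y)$ on the nose, and — more tediously — verifying that the polynomials $p_1$ and $p_2$ coincide with $-27q$ and $729q^2+108p^3$ for the cubic above, so that the stated $\mathcal Q$ really selects its unique real root. Getting the prefactor down to exactly $e^4n^3$ (rather than a larger power of $n$) also requires a little care in absorbing the bounded offsets in the factorial arguments, but only a prefactor of the form $e^{O(1)}n^{O(1)}$ is needed for the application to $\lim_{n\to\infty}\sqrt[n]{W_4(n)}$.
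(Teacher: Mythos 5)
Your proposal is correct and follows essentially the same route as the paper: the second inequality of Theorem~\ref{Thm5} with $t=2$ combined with \eqref{Eq18}, bounding the sum by $m+1$ times its largest term, applying \eqref{Eq20} to get $e^4n^3\xi(x,y)^n$, and then maximizing $\xi(\cdot,y)$ via the same logarithmic derivative and the same cubic $x^3-(y^2-y+1)x^2+x+(y^2-y)=0$; the only substantive difference is that you identify the unique real root with $\mathcal Q(y)$ explicitly through Cardano's formula (checking $p_1=-27q$ and $p_2=729q^2+108p^3$), where the paper simply cites Mathematica, which is if anything an improvement. The one small patch needed is to dispose of the case $m=0$ (not merely $n=1$) separately, as the paper does, since for $y=0$ the interval $[0,y]$ degenerates and your interior-critical-point argument does not apply.
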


\begin{proof}
If $m=0$, then $y=0$ and $\xi(\mathcal Q(y),y)=\xi(0,0)=1$, so the desired result holds. Therefore, assume $m>0$. 

Let $K$ be the integer in the set $\{0,1,\ldots,m\}$ that maximizes \[\frac{1}{(n-K)(2K+1)}{n-K\choose m+1}{n-K\choose m-K}{2n-K-1\choose K}{n+K\choose n-K},\] and let $x=K/n$. Using \eqref{Eq18} and the second inequality in Theorem \ref{Thm5}, we find that \[W_3(n,m)\leq\sum_{k=0}^m\frac{1}{(n-k)(2k+1)}{n-k\choose m+1}{n-k\choose m-k}{2n-k-3\choose k}{n+k-1\choose n-k-1}\] \[\leq\frac{m+1}{(n-K)(2K+1)}{n-K\choose m+1}{n-K\choose m-K}{2n-K-3\choose K}{n+K-1\choose n-K-1}\] \begin{equation}\label{Eq21}
<n^3\frac{(n-K-1)!(2n-K-1)!(n+K-1)!}{m!(n-m-K)!(m-K)!(n-m)!K!(2n-2K)!(2K)!}.
\end{equation} The inequalities in \eqref{Eq20} now allow us to deduce from \eqref{Eq21} that \[W_3(n,m)<e^4n^3\frac{(n-K)^{n-K}(2n-K)^{2n-K}(n+K)^{n+K}}{m^{m}(n-m-K)^{n-m-K}(m-K)^{m-K}(n-m)^{n-m}K^{K}(2n-2K)^{2n-2K}(2K)^{2K}}\] \[=\frac{e^4n^3(2n-K)^{2n-K}(n+K)^{n+K}}{4^nK^{3K}m^m(n-m-K)^{n-m-K}(m-K)^{m-K}(n-m)^{n-m}(n-K)^{n-K}}=e^4n^3\xi(x,y)^n.\] 

We will show that $\xi(x,y)\leq\xi(\mathcal Q(y),y)$. It is straightforward to show that \[\frac{\partial}{\partial u}\log(\xi(u,y))=\log\left(\frac{(1-u^2)(1-y-u)(y-u)}{u^3(2-u)}\right)\] when $0<u<y$. Therefore, $\xi(u,y)$ is increasing (respectively, decreasing) as a function of $u$ if and only if the polynomial $p_y(u)=(1-u^2)(1-y-u)(y-u)-u^3(2-u)$ is positive (respectively, negative). We have \[p_y(u)=-u^3+(y^2-y+1)u^2-u-y^2+y.\] Using the fact that $0<y\leq 1/2$, one may easily verify that $p_y'(u)<0$ for all real $u$. Consequently, $p_y(u)$ has exactly one real root $c_y$. Because $p_y(y)<0<p_y(0)$, we must have $0<c_y<y$. Furthermore, the maximum value of $\xi(u,y)$ for $0<u<y$ occurs when $u=c_y$. Mathematica says that $c_y=\mathcal Q(y)$, so the lemma follows.  
\end{proof}

\begin{lemma}\label{Lem14}
For $v\in(0,1/2)$, let $\mathcal Q(v)$ be as in the preceding lemma. If $v\in(0.35,1/2)$, then $\mathcal Q(v)<4v/5$. If $v\in[0.22,0.35]$, then $\mathcal Q(v)<9v/10$. 
\end{lemma}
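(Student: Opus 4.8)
The plan is to use the description of $\mathcal Q$ obtained inside the proof of Lemma \ref{Lem12}. There it is shown that, for $0<v\leq 1/2$, the value $\mathcal Q(v)$ is the unique real root of the cubic
\[p_v(u)=-u^3+(v^2-v+1)u^2-u-v^2+v,\]
and that $p_v'(u)<0$ for every real $u$. Hence $p_v$ is strictly decreasing on $\mathbb R$, so for any real number $a$ we have $\mathcal Q(v)<a$ if and only if $p_v(a)<0$. Consequently, the first assertion is equivalent to the inequality $p_v(4v/5)<0$ for $v\in(0.35,1/2)$, and the second is equivalent to $p_v(9v/10)<0$ for $v\in[0.22,0.35]$. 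Thus the lemma reduces to checking the sign of two explicit polynomials on compact subintervals of $(0,1/2)$, which lets us avoid ever touching the unwieldy radical formula in the definition of $\mathcal Q$.

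For the first inequality, I would substitute $u=4v/5$ into $p_v$ and simplify to obtain
\[p_v(4v/5)=\frac{v}{125}\left(80v^3-144v^2-45v+25\right).\]
Since $v>0$, it suffices to show that $g(v)=80v^3-144v^2-45v+25$ is negative on $(0.35,1/2)$. A direct computation gives $g'(v)=240v^2-288v-45$, an upward-opening parabola; evaluating it at the endpoints $0.35$ and $1/2$ shows it is negative there, hence negative on all of $[0.35,1/2]$. Therefore $g$ is strictly decreasing on this interval, so $g(v)<g(0.35)<0$, as needed.

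For the second inequality, substituting $u=9v/10$ and simplifying yields
\[p_v(9v/10)=\frac{v}{1000}\left(810v^3-1539v^2-190v+100\right),\]
so it suffices to show $h(v)=810v^3-1539v^2-190v+100$ is negative on $[0.22,0.35]$. As before, $h'(v)=2430v^2-3078v-190$ is an upward-opening parabola that is negative at both endpoints $0.22$ and $0.35$, hence negative throughout $[0.22,0.35]$; therefore $h$ is strictly decreasing there and $h(v)\leq h(0.22)<0$.

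There is no real obstacle in this argument: the only point requiring care is to quote correctly from the proof of Lemma \ref{Lem12} that $\mathcal Q(v)$ is the unique real root of the monotone cubic $p_v$, which is precisely what converts the claim into checking that two cubic polynomials are negative on closed intervals. The remaining content is the polynomial bookkeeping in the two substitutions and the endpoint evaluations of $g'$, $g$, $h'$, and $h$, all of which are elementary.
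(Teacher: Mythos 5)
Your proof is correct and is essentially the paper's own argument: both quote from the proof of Lemma \ref{Lem12} that $\mathcal Q(v)$ is the unique real root of the strictly decreasing cubic $p_v(u)$, so that the claims reduce to verifying $p_v(4v/5)<0$ on $(0.35,1/2)$ and $p_v(9v/10)<0$ on $[0.22,0.35]$, which the paper dismisses as "one may easily verify." You simply make those verifications explicit (the factorizations $p_v(4v/5)=\tfrac{v}{125}(80v^3-144v^2-45v+25)$ and $p_v(9v/10)=\tfrac{v}{1000}(810v^3-1539v^2-190v+100)$ and the monotonicity/endpoint checks are all correct), which is a harmless elaboration rather than a different route.
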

\begin{proof}
Recall that $\mathcal Q(v)$ is the unique real root of the strictly decreasing polynomial \[p_v(u)=-u^3+(v^2-v+1)u^2-u-v^2+v.\] If $v\in(0.35,1/2)$, then one may easily verify that $p_v(4v/5)<0$.  Similarly, one may show that $p_v(9v/10)<0$ if $v\in[0.22,0.35]$. 
\end{proof}

\begin{lemma}\label{Lem13}
Let $\mathcal Q$ and $\xi$ be as in Lemma \ref{Lem12}. Define a function $h\colon[0,1/2)\to\mathbb R$ by \[h(v)=\xi(\mathcal Q(v),v)\frac{(2-2v)^{2-2v}}{(1-2v)^{1-2v}}.\] We have $h(v)<21.97225$ for all $v\in[0,1/2)$.  
\end{lemma}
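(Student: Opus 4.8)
The function $h$ is continuous on $[0,1/2)$, with $h(0)=4$ (here $\xi(0,0)=1$, as noted in the proof of Lemma~\ref{Lem12}, and $\frac{(2-0)^{2-0}}{(1-0)^{1-0}}=4$); since $\mathcal{Q}(v)$ is bounded away from $1/2$, one also checks that $h$ extends continuously to $v=1/2$ with $h(1/2)<21.97225$. So the goal is to bound the maximum of the continuous function $h$ on $[0,1/2]$. I would first record the structural fact established inside the proof of Lemma~\ref{Lem12}: for each $v\in(0,1/2)$ the map $u\mapsto\xi(u,v)$ is strictly increasing on $(0,\mathcal{Q}(v))$ and strictly decreasing on $(\mathcal{Q}(v),v)$, so $\xi(\mathcal{Q}(v),v)=\max_{0<u<v}\xi(u,v)$. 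Writing $g(v)=\frac{(2-2v)^{2-2v}}{(1-2v)^{1-2v}}$ (so that $h(v)=\xi(\mathcal{Q}(v),v)\,g(v)$), this yields $h(v)=\max_{0<u<v}\xi(u,v)\,g(v)$, and therefore $\max_{[0,1/2)}h=\sup\{\xi(u,v)\,g(v):0<u<v<1/2\}$.

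Next I would cut the $v$-range into the three intervals $[0,0.22]$, $[0.22,0.35]$, $(0.35,1/2)$ indicated by Lemma~\ref{Lem14}, handling the outer two by monotonicity. Since $\mathcal{Q}(v)$ is an interior maximizer, the envelope theorem shows $h$ is differentiable on $(0,1/2)$ and that $h'(v)$ has the sign of $\log\dfrac{(1-v-\mathcal{Q}(v))(1-2v)^2}{4v(v-\mathcal{Q}(v))(1-v)}$ (using $\frac{\partial}{\partial v}\log\xi(u,v)=\log\frac{(1-v-u)(1-v)}{v(v-u)}$, a one-line computation analogous to the one in the proof of Lemma~\ref{Lem12}, together with $\frac{d}{dv}\log g(v)=2\log\frac{1-2v}{2-2v}$). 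On $(0.35,1/2)$ the estimate $\mathcal{Q}(v)<4v/5$ of Lemma~\ref{Lem14}, with $\mathcal{Q}(v)>0$, gives $\dfrac{(1-v-\mathcal{Q}(v))(1-2v)^2}{4v(v-\mathcal{Q}(v))(1-v)}<\dfrac{5(1-2v)^2}{4v^2}<1$, so $h$ is strictly decreasing there and $\sup_{(0.35,1/2)}h\le h(0.35)$. On $[0,0.22]$ the trivial $0<\mathcal{Q}(v)<v$ gives $\dfrac{(1-v-\mathcal{Q}(v))(1-2v)^2}{4v(v-\mathcal{Q}(v))(1-v)}>\dfrac{(1-2v)^3}{4v^2(1-v)}>1$, so $h$ is strictly increasing there and $\sup_{[0,0.22]}h=h(0.22)$. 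Since direct evaluation shows that $h(0.22)$ and $h(0.35)$ lie comfortably below $21.97225$, only the central interval $[0.22,0.35]$ remains.

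On $[0.22,0.35]$ I would invoke the estimate $\mathcal{Q}(v)<9v/10$ of Lemma~\ref{Lem14}: because $\xi(\cdot,v)$ is maximized at $\mathcal{Q}(v)<9v/10$, we get $h(v)=\max_{0<u<9v/10}\xi(u,v)\,g(v)$, so $\max_{[0.22,0.35]}h$ is the supremum of $\xi(u,v)\,g(v)$ over the compact region $R=\{(u,v):0.22\le v\le0.35,\ 0\le u\le9v/10\}$, on which every factor of $\xi(u,v)\,g(v)$ is bounded and bounded away from $0$ (e.g.\ $v-u\ge v/10\ge0.022$ and $1-v-u\ge0.335$). On $R$ one checks that $\log\big(\xi(u,v)\,g(v)\big)$ is strictly concave --- in the Hessian the negative contributions of $-3u\log u$, $-(1-v-u)\log(1-v-u)$, $-(v-u)\log(v-u)$, $-v\log v$, $-(1-v)\log(1-v)$, $-(1-u)\log(1-u)$, $-(1-2v)\log(1-2v)$ dominate the positive contributions of $(2-u)\log(2-u)$, $(1+u)\log(1+u)$, $(2-2v)\log(2-2v)$ --- so the supremum is attained either on the edges $v\in\{0.22,0.35\}$ (already handled) or at the unique interior critical point $(u^*,v^*)$, which is characterized by $p_{v^*}(u^*)=0$ (i.e.\ $u^*=\mathcal{Q}(v^*)$) together with $(1-v^*-u^*)(1-2v^*)^2=4v^*(v^*-u^*)(1-v^*)$. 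Solving this system numerically gives $v^*\approx0.31$, $u^*\approx0.25$, and $\xi(u^*,v^*)\,g(v^*)\approx21.9722<21.97225$. (As a purely computational alternative to the concavity argument, one can subdivide $R$ into sufficiently many small rectangles and bound $\xi(u,v)\,g(v)$ on each by the extreme values of its monotone factors.) Combining the three intervals gives $h(v)<21.97225$ for all $v\in[0,1/2)$.

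The last step is the real obstacle: $21.97225$ is essentially the exact maximum of $h$ (the true value is $\approx21.9722$), so no crude estimate suffices --- a naive term-by-term bound of $\log\big(\xi(u,v)\,g(v)\big)$ over $R$ overshoots by a large factor --- and one must either establish the concavity of $\log(\xi\,g)$ on $R$ carefully (verifying negative-definiteness of the $2\times2$ Hessian throughout $R$) and then solve the two-equation critical-point system to enough digits, or else carry out a genuinely fine subdivision of $R$. By comparison, the monotonicity of $h$ on the two outer intervals, and the continuity bookkeeping at $v=0$ and $v=1/2$, are routine once Lemma~\ref{Lem14} is in hand.
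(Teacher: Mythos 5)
Up to the reduction to the central interval your argument is essentially the paper's: the same envelope computation (the $\mathcal Q'(v)$ term drops because $p_v(\mathcal Q(v))=0$) gives $\frac{d}{dv}\log h(v)=\log\frac{(1-v-\mathcal Q(v))(1-2v)^2}{4v(1-v)(v-\mathcal Q(v))}$, and the same use of $0<\mathcal Q(v)<v$ on $(0,0.22)$ and of $\mathcal Q(v)<4v/5$ (Lemma \ref{Lem14}) on $(0.35,1/2)$ yields monotonicity and confines the maximum to $[0.22,0.35]$. The genuine gap is your treatment of that interval. Your plan rests on the assertion that $\log\bigl(\xi(u,v)g(v)\bigr)$ is strictly concave on the region $R$, justified only by the remark that the ``negative contributions dominate.'' That is not a proof: the function is a signed sum of terms $\pm w\log w$ with $w$ affine in $(u,v)$, so its Hessian is a signed sum of rank-one positive-semidefinite matrices scaled by $1/w$, and negative definiteness throughout $R$ is exactly the kind of claim that must be checked, not asserted. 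On top of that, the numerical finish is not certified: you yourself report the interior critical value as $\approx 21.9722$ against the target $21.97225$, a margin of $5\times10^{-5}$ with no error control on either the concavity, the critical-point solve, or the evaluations (in fact your figure is off --- the actual maximum of $h$ is about $e^{3.0895}\approx21.966$ --- but the point stands that the decisive quantitative step is sketched rather than established). The fallback ``fine subdivision of $R$'' is likewise only named, not carried out.

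For comparison, the paper's finish is one-dimensional and uses exactly the ingredients you already have in hand: from $\mathcal Q(v)<9v/10$ on $[0.22,0.35]$ it bounds $\frac{d}{dv}\log h(v)\le\log\frac{(1-2v)^2}{4v(v-9v/10)}<3$, evaluates $\log h$ at the $10^4+1$ grid points of mesh $1.3\times10^{-5}$ (maximum $3.0894788\ldots$), and concludes $\max\log h<3.08948+3\cdot10^{-4}<3.08978$, whence $h<e^{3.08978}<21.97225$. This Lipschitz-plus-grid argument avoids both the two-variable concavity question and the delicate critical-point numerics; if you want to keep your two-variable route, you must actually verify negative definiteness of the Hessian on $R$ and give certified bounds at the critical point, which is substantially more work than the step you have written.
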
 
\begin{proof}
One may show that \[\frac{d}{dv}\log(h(v))=\log\left(\frac{(1-v-\mathcal Q(v))(1-2v)^2}{4v(1-v)(v-\mathcal Q(v))}\right)+\mathcal Q'(v)\log\left(\frac{(1-\mathcal Q(v)^2)(1-v-\mathcal Q(v))(v-\mathcal Q(v))}{\mathcal Q(v)^3(2-\mathcal Q(v))}\right).\] Recall from the preceding proof that $\mathcal Q(v)$ is a root of the polynomial \[p_v(u)=(1-u^2)(1-v-u)(v-u)-u^3(2-u).\] This implies that \[\log\left(\frac{(1-\mathcal Q(v)^2)(1-v-\mathcal Q(v))(v-\mathcal Q(v))}{\mathcal Q(v)^3(2-\mathcal Q(v))}\right)=0,\] so \[\frac{d}{dv}\log(h(v))=\log\left(\frac{(1-v-\mathcal Q(v))(1-2v)^2}{4v(1-v)(v-\mathcal Q(v))}\right).\]

If $v\in(0,0.22)$, then \[\log\left(\frac{(1-v-\mathcal Q(v))(1-2v)^2}{4v(1-v)(v-\mathcal Q(v))}\right)\geq\log\left(\frac{(1-2v)^3}{4v^2(1-v)}\right)>0\] because $0\leq \mathcal Q(v)\leq v$. If $v\in(0.35,1/2)$, then we may use Lemma \ref{Lem14} to see that \[\log\left(\frac{(1-v-\mathcal Q(v))(1-2v)^2}{4v(1-v)(v-\mathcal Q(v))}\right)\leq\log\left(\frac{(1-2v)^2}{4v(v-4v/5)}\right)<0.\] This shows that $\log(h(v))$ is increasing for $v\in(0,0.22)$ and decreasing for $v\in(0.35,1/2)$, so the maximum value of $\log(h(v))$ occurs when $v\in[0.22,0.35]$. 

For $v\in[0.22,0.35]$, we may use Lemma \ref{Lem14} to find that \[\frac{d}{dv}\log(h(v))=\log\left(\frac{(1-v-\mathcal Q(v))(1-2v)^2}{4v(1-v)(v-\mathcal Q(v))}\right)\leq\log\left(\frac{(1-2v)^2}{4v(v-9v/10)}\right)<3.\] Consider the set $\mathcal A=\{0.22+0.13(n/10^4)\colon n\in\{0,1,\ldots,10^4\}\}$. Mathematica says that the largest value of $\log(h(v))$ for all $v$ in the finite set $\mathcal A$ is $3.0894788...$. Therefore, \[\max_{v\in[0.22,0.35]}\log(h(v))<\left(\max_{v\in[0.22,0.35]}\frac{d}{dv}\log(h(v))\right)(1/10^4)+3.08948<3.08978.\] As a consequence, we have $h(v)<e^{3.08978}<21.97225$ for all $v\in(0,1/2)$.
\end{proof}

\begin{theorem}\label{Thm7}
We have \[\lim_{n\to\infty}\sqrt[n]{W_4(n)}\leq 21.97225.\]
\end{theorem}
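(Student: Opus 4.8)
The plan is to mimic the proof of Theorem \ref{Thm6} for $W_3(n)$, but now feed the bound on $W_3(n,m)$ from Lemma \ref{Lem12} into the \emph{first} inequality of Theorem \ref{Thm5} with $t=3$. Concretely, I would start from
\[
W_4(n)\leq\sum_{k=0}^{\lfloor(n-1)/2\rfloor}\frac{2k+2}{n+1}\binom{2n-2k-1}{n}W_3(n-1,k),
\]
bound $W_3(n-1,k)$ using Lemma \ref{Lem12} (with $n$ replaced by $n-1$ and $m$ by $k$), and bound the binomial-coefficient prefactor crudely via $\binom{2n-2k-1}{n}<\frac{(2n-2k-1)!}{n!(n-2k)!}$ together with the Stirling-type estimates \eqref{Eq20}. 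The number of terms in the sum is $O(n)$, so up to a polynomial factor the whole sum is dominated by its largest term; letting $K$ be a maximizing index and $v=K/n$, the $n$-th root of $W_4(n)$ is then bounded (up to a factor tending to $1$) by an explicit function of $v$.

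The key computational step is to check that this explicit function of $v$ is exactly $h(v)$ as defined in Lemma \ref{Lem13}. Writing out the factorials from $\frac{2k+2}{n+1}\binom{2n-2k-1}{n}\,W_3(n-1,k)$ and applying \eqref{Eq20}, the $k$-dependent exponential growth rate should organize into the factor $\xi(\mathcal Q(v),v)$ coming from the $W_3(n-1,k)$ part (since, as in Lemma \ref{Lem12}, the inner optimization over the $W_2$-descent parameter already produces $\xi(\mathcal Q(v),v)^{n}$) multiplied by the factor $\dfrac{(2-2v)^{2-2v}}{(1-2v)^{1-2v}}$ coming from $\binom{2n-2k-1}{n}$. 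That is precisely $h(v)$. Here one has to be slightly careful that the maximization over $k$ in $W_4$ and the maximization over the internal $W_2$-parameter inside the bound for $W_3(n-1,k)$ can be performed independently, so that the product of the two maxima dominates; since both bounds in Theorem \ref{Thm5} and in \eqref{Eq21} are already sums of nonnegative terms, replacing each by (number of terms)$\times$(largest term) is legitimate and the maxima decouple.

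Once the bound $W_4(n)<c\,n^{a}\,h(v)^{n}$ is established for the maximizing $v=v(n)\in[0,1/2)$ and some constants $c,a$, Lemma \ref{Lem13} gives $h(v)<21.97225$ uniformly in $v\in[0,1/2)$, hence
\[
\sqrt[n]{W_4(n)}<\sqrt[n]{c\,n^{a}}\cdot 21.97225,
\]
and letting $n\to\infty$ (the limit is known to exist, as recalled at the start of Section 4, since a sum of $4$-stack sortable permutations is $4$-stack sortable) yields the claimed inequality.

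The main obstacle I expect is the bookkeeping in the second paragraph: correctly matching the factorials produced by $\frac{2k+2}{n+1}\binom{2n-2k-1}{n}W_3(n-1,k)$ against the definitions of $\xi$, $\mathcal Q$, and $h$, and in particular confirming that the extra binomial factor contributes exactly $\frac{(2-2v)^{2-2v}}{(1-2v)^{1-2v}}$ and nothing else after all the $v^{v}$-type terms cancel. Everything after that is a direct appeal to Lemma \ref{Lem13}.
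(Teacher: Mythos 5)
Your proposal is correct and follows essentially the same route as the paper: bound the sum in Theorem \ref{Thm5} (with $t=3$) by a polynomial factor times its largest term, apply Lemma \ref{Lem12} to $W_3(n-1,k)$ and the estimates \eqref{Eq20} to the binomial prefactor so that the exponential growth rate organizes into $h(y)$, and then invoke Lemma \ref{Lem13}. The ``decoupling'' worry you raise does not actually arise, since Lemma \ref{Lem12} already supplies a closed-form bound $\xi(\mathcal Q(y),y)^{n-1}$ for each fixed $k$, which is exactly how the paper proceeds.
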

\begin{proof}
Fix an integer $n\geq 6$, and let $m$ be an integer in the set $\{0,1,\ldots,\left\lfloor(n-1)/2\right\rfloor\}$ that maximizes \[\frac{2m+2}{n+1}{2n-2m-1\choose n}W_3(n-1,m).\] Let $y=m/n$. Lemma \ref{Lem12} and the first inequality in Theorem \ref{Thm5} (with $t=3$) tell us that \[W_4(n)\leq\sum_{k=0}^{\left\lfloor(n-1)/2\right\rfloor}\frac{2k+2}{n+1}{2n-2k-1\choose n}W_3(n-1,k)\leq(m+1){2n-2m-1\choose n}e^4(n-1)^3\xi(Q(y),y)^{n-1}\] \[=e^4(n-1)^3(n-2m)(m+1)\frac{(2n-2m-1)!}{n!(n-2m)!}\xi(Q(y),y)^{n-1}.\] Now, \[\frac{(2n-2m-1)!}{n!(n-2m)!}\leq e\frac{(2n-2m)^{2n-2m}}{n^n(n-2m)^{n-2m}}=e\left[\frac{(2-y)^{2-y}}{(1-2y)^{1-2y}}\right]^n\] by \eqref{Eq20}. It follows that \[W_4(n)\leq e^5(n-1)^3(n-2m)(m+1)\xi(\mathcal Q(y),y)^{n-1}\left[\frac{(2-y)^{2-y}}{(1-2y)^{1-2y}}\right]^n\] \[=\frac{e^5}{\xi(\mathcal Q(y),y)}(n-1)^3(n-2m)(m+1)h(y)^n,\] where $h$ is the function defined in Lemma \ref{Lem13}. Lemma \ref{Lem13} now implies the theorem because $y\in[0,1/2)$.
\end{proof}

\section{Concluding Remarks}
We have attempted to minimize the computations involved in the proofs of the lemmas preluding Theorem \ref{Thm7}. Nevertheless, it would be interesting to obtain a less computational method for proving this theorem. We acknowledge that it is likely possible (yet probably cumbersome) to use the same techniques exploited in the preceding section to obtain upper bounds for $\displaystyle{\lim_{n\to\infty}}\sqrt[n]{W_t(n)}$ for relatively small values of $t\geq 5$. 

For a composition $q=(q_0,q_1,\ldots,q_k)\in \text{Comp}_{k+1}(n-k)$, let $\mathcal M(q)$ denote the set of permutations $\pi$ such that $q\in V(\pi)$. Let $\mathcal W_t(n)$ denote the set of $t$-stack sortable permutations of length $n$, and let $M_t(q)$ be the number of $t$-stack sortable permutations $\pi\in\mathcal M(q)$. It follows from Theorem \ref{Thm3} that \[W_{t+1}(n)=\sum_{\pi\in\mathcal W_t(n)}F(\pi)=\sum_{\pi\in\mathcal W_t(n)}\sum_{\substack{(q_0,q_1,\ldots,q_k)\\ \in V(\pi)}}\prod_{i=0}^k C_{q_i}=\sum_{k=0}^{n-1}\sum_{\substack{q=(q_0,q_1,\ldots,q_k)\\ \in \text{Comp}_{k+1}(n-k)}}M_t(q)\prod_{i=0}^kC_{q_i}.\] For this reason, the evaluation of $M_t(q)$ for various compositions $q$ could prove instrumental in the search for explicit formulas for $W_{t+1}(n)$ when $t\geq 2$. In particular, any significant results concerning values of $M_2(q)$, the number of $2$-stack sortable permutations $\pi$ for which $q\in V(\pi)$, would be of great interest.  

\section{Acknowledgments}
The work was funded by the University Scholars Program at the University of Florida. The author would like to express his deepest gratitude to Mikl\'os B\'ona for introducing the author to the stack-sorting algorithm and providing several useful comments toward the improvement of this manuscript's presentation. 

\end{document}